\newtheorem{theo}{Theorem}[section]
\newtheorem{lemm}[theo]{Lemma}
\newtheorem{coro}[theo]{Corollary}
\newtheorem{prop}[theo]{Proposition}
\title{\bf Semi-infinite TASEP with a Complex Boundary Mechanism}
\author{By Nicky Sonigo}
\date{\textit{UMPA, ENS Lyon, Universit\'e de Lyon}}
\begin{document}

\newcommand{\ZZ}{\mathbb{Z}}
\newcommand{\ZZp}{\mathbb{Z}_+^*}
\newcommand{\RR}{\mathbb{R}}
\newcommand{\RRp}{\mathbb{R}_+^*}
\newcommand{\NN}{\mathbb{N}}
\newcommand{\Bcal}{\mathcal{B}}
\newcommand{\Si}[1][i]{S^{(#1)}(t)}
\newcommand{\Oi}[1][i]{\Omega^{(#1)}}
\newcommand{\Mi}[1][i]{\mu^{(#1)}_{\infty}}
\newcommand{\Ni}[1][i]{N^{(#1)}_t}
\newcommand{\Ei}[1][i]{\eta^{(#1)}_t}
\newcommand{\Ti}[1][i]{T^{(#1)}_t}
\newcommand{\TTi}[1][i]{\tilde{T}^{(#1)}_t}
\newcommand{\tend}[2][\infty]{\underset{#2 \rightarrow #1}{\longrightarrow}}
\newcommand{\Ncal}{\mathcal{N}}
\newcommand{\eqdef}{\mathrel{\mathop:}=}
\newcommand{\infset}[1]{\inf \left\{ t \geq 0 : #1\right\}}
\renewcommand{\supset}[1]{\sup \left\{ t \geq 0 : #1\right\}}
\newcommand{\indic}[1]{\mathbf{1}_{#1}}
\newcommand{\prob}[1]{\mathbf{P}\left[ #1 \right]}
\newcommand{\esp}[1]{\mathbf{E}\left[ #1 \right]}
\renewcommand{\(}{\left(}
\renewcommand{\)}{\right)}

\newlength\lgr
\setlength{\lgr}{\textwidth}
\addtolength{\lgr}{-2cm}

\maketitle


\begin{abstract}
We consider a totally asymmetric exclusion process on the positive half-line. When particles enter in the system according to a Poisson source, Liggett has computed all the limit distributions when the initial distribution has an asymptotic density. In this paper we consider systems for which particles enter at the boundary according to a complex mechanism depending on the current configuration in a finite neighborhood of the origin. For this kind of models, we prove a strong law of large numbers for the number of particles entered in the system at a given time. Our main tool is a new representation of the model as a multi-type particle system with infinitely many particle types.
\end{abstract}

\section{Introduction}

The simple exclusion process $\eta_. = (\eta_t)_{t \geq 0}$ on a countable space $S$, with random walk kernel $p(.)$, is a continuous time Markov process on $X \eqdef \{ 0,1 \}^S$. For a configuration $\eta \in X$, we say that the site $x$ is occupied (by a particle) if $\eta(x) = 1$, and is empty if $\eta(x) = 0$. A particle "tries" to move from an occupied site $x$ to an empty site $y$ at rate $p(x,y)$, or in an equivalent way, waits for an exponential time of parameter 1 and then chooses randomly a site $y$ with probability $p(x,y)$ and "tries" to jump on $y$. When the site $y$ is already occupied, the jump is canceled and the particle stays at $x$. In this way, there is always at most one particle at a given site. Formally, the exclusion process $\eta_.$ is defined as the Feller process with generator
\begin{equation} \label{gSEP}
\Omega f(\eta) \eqdef \sum_{x,y \in S} p(x,y) \eta(x) \left( 1-\eta(y) \right) \left[ f(\eta_{x,y}) - f(\eta) \right],
\end{equation}
for all cylinder functions $f$, where
\begin{equation*}
\eta_{x,y}(z) \eqdef \left\{ \begin{array}{ll}
					\eta(y) & \text{if } z=x,\\
					\eta(x) & \text{if } z=y,\\
					\eta(z) & \text{otherwise.}
				       \end{array} \right.
\end{equation*}
A natural question is to describe the set of invariant probability measures $\mathcal{I}$ which is the set of probability measures $\mu$ on $S$ such that, if $\eta_0 \sim \mu$ then for all $t \geq 0$, $\eta_t \sim \mu$. These measures are characterized  by the equations:
\begin{equation*}
\int \Omega f \mu(d\eta) = 0,
\end{equation*}
for any cylinder functions $f$ (see e.g. \cite{Liggett1985} for a review). In the case $S = \ZZ$, the set of translation invariant stationary measures is exactly the set of convex combinations of product Bernoulli measures on $\ZZ$ (see \cite{Liggett1976}).

\bigskip

In this paper, we consider the case $S \eqdef \ZZp$, or $S \eqdef \left\{ 1,\ldots,R \right\}$ with $R \geq 2$, and $p(x,x+1) \eqdef 1$, i.e., the totally asymmetric nearest neighbor case. In $\ZZp$, one has to add some boundary mechanism to make the model non trivial. The simplest way to do this is to add a particle reservoir at site $0$ with a certain density $\lambda > 0$. This means that a new particle is created at site $1$ according to a Poisson process with rate $\lambda$ when this site is empty. We call the model on $\ZZp$ the TASEP($\lambda$), and we denote by $\Omega_{\lambda}$ its generator and by $S_{\lambda}(t)$ its semi-group:
\begin{equation} \label{gsTASEP}
\Omega_\lambda f(\eta) = \lambda \left( 1-\eta(1) \right) \left[ f(\eta_1) - f(\eta) \right] + \sum^{\infty}_{x = 1} \eta(x) \left( 1-\eta(x+1) \right) \left[ f(\eta_{x,x+1}) - f(\eta) \right],
\end{equation}
for all cylinder functions $f$, where
\begin{equation*}
\eta_1(z) \eqdef \left\{ \begin{array}{ll}
					1-\eta(1) & \text{if } z=1,\\
					\eta(z) & \text{otherwise.}
				       \end{array} \right.
\end{equation*}
In \eqref{gsTASEP} we see two parts for the generator: one is due to the boundary mechanism and we will call it the \emph{boundary part}; the other one, which has the form given by \eqref{gSEP} for $S=\ZZp$, is due to the exclusion process and we will call it the \emph{bulk part}.
In the same way, if the state space is $\left\{ 1,\ldots,R \right\}$, then we add a particle reservoir with density $\rho \in \left[ 0,1 \right]$ at site $R+1$, which means that when the site $R$ is occupied, then the particle in this site disappears with rate $1-\rho$.

Let us introduce some notation. In the following, we denote by $\nu^\lambda$ the product measure on $\ZZp$ with density $\lambda$ and by $\tau$ the shift. $\tau$ acts on configurations $\eta \in X$ by
\begin{equation*}
\tau \eta(x) \eqdef \eta(x+1), \forall x \in \ZZp,
\end{equation*}
on functions $f : X \longrightarrow \RR$ by
\begin{equation*}
\tau f(\eta) \eqdef f(\tau \eta), \forall \eta \in X,
\end{equation*}
and on measures $\mu$ on X by
\begin{equation*}
\int f d\tau\mu \eqdef \int \tau f d\mu, \forall f \in L^1(\mu).
\end{equation*}
For a measure $\mu$ on $S$ and $f \in L^1(\mu)$, we will denote $\left\langle f \right\rangle_\mu = \int f d\mu$.\\

\bigskip

We are interested in the limit behavior of the distribution at time $t$. For this model, we have a good understanding about what happens at equilibrium. Indeed, Liggett has shown in \cite{Liggett1975} the following ergodic theorem, which gives the limit measure for an initial measure with a product form and an asymptotic density:
\begin{theo}[Liggett  \cite{Liggett1975}] \label{th1}
Let $\pi$ be a product measure on $\ZZp$ for which\\
$\rho \eqdef \lim_{x \to \infty} \left\langle \eta(x) \right\rangle_{\pi}$ exists.
\begin{align*}
&\text{If } \lambda \geq \frac{1}{2} \text{ then } \lim_{t \to \infty} \pi S_{\lambda}(t) =
\begin{cases}
  \mu^{\lambda}_{\rho}, \text{ if } \rho \geq \frac{1}{2} \text{ (bulk dominated),}\\
  \mu^{\lambda}_{\frac{1}{2}}, \text{ if } \rho \leq \frac{1}{2} \text{ (maximum current).}
\end{cases}\\
&\text{If } \lambda \leq \frac{1}{2} \text{ then } \lim_{t \to \infty} \pi S_{\lambda}(t) =
\begin{cases}
  \mu^{\lambda}_{\rho}, \text{ if } \rho > 1-\lambda \text{ (bulk dominated),}\\
  \nu^{\lambda}, \text{ if } \rho \leq 1-\lambda \text{ (boundary dominated),}
\end{cases}
\end{align*}
where the $\mu^{\lambda}_{\rho}$'s, for $\rho \geq \frac{1}{2}$, are stationary measures and asymptotically product with density $\rho$, i.e., $\lim_{x \to \infty} \tau^x \mu^{\lambda}_{\rho} = \nu^{\rho}$ (in a weak sense with test functions $f \in C(X,\mathbb{R})$). We also have $\mu^{\lambda}_{\lambda} = \nu^{\lambda}$.
\end{theo}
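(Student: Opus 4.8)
\medskip
\noindent\textit{Proof proposal.} This is Liggett's theorem; here is the route I would take. Everything would rest on the graphical construction of $S_\lambda(t)$ — a rate-$\lambda$ Poisson clock at the boundary (ring: fill site $1$ if it is empty) together with rate-one clocks on the edges $(x,x+1)$, $x\ge 1$ (ring: move the particle from $x$ to $x+1$ if possible) — and on the basic coupling obtained by driving several initial configurations with the same clocks. The first thing to check is that this coupling preserves the coordinatewise order, so that $S_\lambda(t)$ is attractive: $\pi_1\preceq\pi_2\Rightarrow\pi_1 S_\lambda(t)\preceq\pi_2 S_\lambda(t)$. Two invariant measures then come for free: $\delta_{\mathbf 1}$, the point mass on the full configuration, which is frozen (this is $\mu^\lambda_1=\nu^1$), and $\nu^\lambda$, for which $\int\Omega_\lambda f\,d\nu^\lambda=0$ by the flux-balance identity that the injection current $\lambda(1-\langle\eta(1)\rangle_{\nu^\lambda})=\lambda(1-\lambda)$ equals the bulk current $\langle\eta(x)(1-\eta(x+1))\rangle_{\nu^\lambda}=\lambda(1-\lambda)$ across every edge. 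The measures $\mu^\lambda_\rho$ for $\rho\ge 1/2$ I would simply \emph{define} as the limits $\lim_{t\to\infty}\nu^\rho S_\lambda(t)$ once these are shown to exist; this also yields their stationarity and, via the coupling of the next step, the asymptotic product property $\tau^x\mu^\lambda_\rho\Rightarrow\nu^\rho$.

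Second, I would reduce the problem to homogeneous initial data and to the process on all of $\ZZ$. The limit should depend only on $\rho$: coupling the process from $\pi$ with the one from $\nu^\rho$, the discrepancies form a collection of second-class particles which, because the dynamics are totally asymmetric, are either absorbed at the origin or swept off toward $+\infty$, and so vacate every fixed window — whence both processes agree there in the limit, reducing the problem to $\pi=\nu^\rho$. Next I would couple the half-line TASEP($\lambda$) with the TASEP on $\ZZ$ started from $\nu^{\lambda\wedge 1/2}\otimes\nu^\rho$ (density $\lambda\wedge 1/2$ on $\ZZ_{\le 0}$ and $\rho$ on $\ZZ_{\ge 1}$): the left block at density $\lambda\wedge 1/2$ is marginally stable — its characteristic speed $1-2(\lambda\wedge 1/2)\ge 0$ points rightward — so when $\lambda\le 1/2$ this block stays in its product equilibrium $\nu^\lambda$ and feeds site $1$ exactly like a rate-$\lambda$ gated source (a Burke-type property), so that the $\ZZ$-process restricted to $\{1,2,\dots\}$ agrees in law with the half-line process; when $\lambda\ge 1/2$ the surplus injection simply piles up against the origin and the effective left density saturates at $1/2$, which one obtains by squeezing TASEP($\lambda$) between TASEP($1/2$) and TASEP($1$), the latter being the TASEP on $\ZZ$ with a wall of particles on $\ZZ_{\le 0}$, whose classical rarefaction fan takes the value $1/2$ at the origin.

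Third, and this is the crux, I would analyse the $\ZZ$-process with step data. Its macroscopic density is the entropy solution of the Burgers equation $\partial_t u+\partial_x\!\left(u(1-u)\right)=0$ with the profile jumping from $\lambda\wedge 1/2$ to $\rho$ at the origin: for $\rho$ below the stated threshold this is a right-moving shock or a rarefaction whose value at the origin is $\lambda$, respectively $1/2$, producing $\nu^\lambda$, respectively $\mu^\lambda_{1/2}$; for $\rho$ above the threshold the shock travels into the origin, the density $\rho$ invades every fixed window, and the limit is $\mu^\lambda_\rho$. The real work — the substance of Liggett's argument — is to make this rigorous at the microscopic level, which simultaneously gives the convergence and identifies the limit: one must show that the interface between the two densities sits in its predicted macroscopic location, i.e.\ prove a microscopic Rankine--Hugoniot law — in the shock case the separating discrepancy has asymptotic velocity $1-(\lambda\wedge 1/2)-\rho$, so it escapes to $+\infty$ exactly when $\rho$ is below threshold and invades the origin otherwise — which one controls via a second-class particle coupled to the background. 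This hydrodynamic control of the shock, not any boundary-specific difficulty, is the main obstacle.
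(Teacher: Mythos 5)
\noindent First, a caveat: the paper does not prove this statement at all --- it is quoted from Liggett (1975) as background --- so there is no in-paper argument to compare yours against; I can only assess your sketch on its own terms. Your first step (attractivity via the basic coupling, invariance of $\nu^{\lambda}$ by flux balance, defining $\mu^{\lambda}_{\rho}$ as $\lim_{t}\nu^{\rho}S_{\lambda}(t)$) is sound and is indeed how those measures are constructed.

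The genuine gap is in your second step. The claim that the TASEP on $\ZZ$ started from $\nu^{\lambda}\otimes\nu^{\rho}$, restricted to $\{1,2,\ldots\}$, ``agrees in law with the half-line process'' is false, and the error is not cosmetic. In the $\ZZ$-process a particle enters site $1$ at rate $\eta_{t}(0)(1-\eta_{t}(1))$; to reproduce a rate-$\lambda$ gated source you would need $\eta_{t}(0)$ to be Bernoulli($\lambda$) at every time, independently of the configuration on $\{1,2,\ldots\}$ and of the past, which the left block does not satisfy once the two blocks interact (the Burke-type statement you invoke concerns the departure process of a stationary queue, not the conditional occupation of one site given its right environment). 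One can see from the theorem itself that the conclusion must fail: in the bulk-dominated phase $\rho>1-\lambda$ the shock in the $\ZZ$-process has speed $1-\lambda-\rho<0$, so near any fixed site the $\ZZ$-process locally equilibrates to $\nu^{\rho}$; but $\nu^{\rho}$ is not invariant for $\Omega_{\lambda}$ (test $f(\eta)=\eta(1)$: $\int\Omega_{\lambda}f\,d\nu^{\rho}=(1-\rho)(\lambda-\rho)\neq 0$ for $\rho\neq\lambda,1$), which is precisely why $\mu^{\lambda}_{\rho}$ is only \emph{asymptotically} product. Your reduction would therefore output the wrong limit near the boundary. A related, smaller issue: the hydrodynamic/Rankine--Hugoniot analysis of your third step controls the macroscopic density and the current, whereas the theorem asserts local weak convergence of $\pi S_{\lambda}(t)$ to a specific stationary measure at the boundary; passing from ``the density at the origin is $\lambda$ (resp. $1/2$, $\rho$)'' to ``the local distribution converges to $\nu^{\lambda}$ (resp. $\mu^{\lambda}_{1/2}$, $\mu^{\lambda}_{\rho}$)'' requires a separate argument. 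Liggett's route stays on the half-line throughout: classify the invariant measures of $\Omega_{\lambda}$ first, then use attractivity and discrepancy (second-class particle) coupling on the half-line to sandwich $\pi S_{\lambda}(t)$ between monotone sequences converging to the appropriate invariant measure.
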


\bigskip

To describe the set of invariant probability measures in the cases $S = \ZZ$ and $S = \ZZp$, he uses extensively that the product Bernoulli measures are invariant and for these mesures one can make explicit computations. To break the product form of the invariant measures, one can consider a boundary mechanism which is not Poisson. We limit ourselves to finite range boundary mechanism, i.e., for which there exist some $R \in \ZZp$ such that the boundary part of the generator vanishes on every cylinder function with support in $\{R+1,\ldots\}$. This idea was first introduced by Grosskinsky in chapter 3 of \cite{Grosskinsky2003} where he defined the following Feller process:
\begin{align}
\begin{split} \label{gscTASEP}
\Omega f(\eta) &\eqdef \sum_{x \in \ZZ^*_+} \eta(x) \left( 1-\eta(x+1) \right) \left[ f(\eta_{x,x+1}) - f(\eta) \right]\\
		        &\hspace{4mm} + \sum_{\xi, \xi' \in X_R} d_{\xi,\xi'} \indic{\eta_{|S_R}=\xi} \left[ f(\xi'\cup\eta_{|^cS_R}) - f(\eta) \right],
\end{split}
\end{align}
for all cylinder functions $f$ where $S_R \eqdef \left\{ 1,\ldots,R \right\}$, $X_R \eqdef \left\{ 0,1 \right\}^{S_R}$ and $\xi' \cup \eta_{|^cS_R}$ is the natural concatenation of configurations on $S_R$ and on $^cS_R$. In \eqref{gscTASEP}, the first sum describes the bulk part and the second sum the boundary part of the dynamic.

\bigskip

The reason for which we only treat the finite range case is that when we are not in this case, pathological things can occur. For example, consider the following dynamic with non local boundary mechanism: Define the asymptotic density of a configuration $\eta \in X$ by $\rho(\eta) \eqdef \liminf_{x\to\infty} \frac{1}{x} \sum_{i=1}^x \eta(i)$; we consider now a TASEP on $\ZZp$ for which the rate of apparition of a particle in site $1$ is $\rho(\eta)$ where $\eta$ is the current configuration. More formally, the boundary part of the generator is
\begin{equation*}
\rho(\eta)(1-\eta(1))\left[ f(\eta_1) - f(\eta) \right].
\end{equation*}
In this example, every mixture of product Bernoulli measures is invariant for the process.

For this generalized boundary mechanism, we will not have an exact solution as for the TASEP($\lambda$). Our approach is to study the number of particles entered in the system up to time $t$. We will see that it grows linearly with an almost sure speed equal to the stationary current $j_\infty$. Define $\rho_\infty$ as the root of $\rho(1-\rho) = j_\infty$ in $[0,1/2[$. We think that the process has a stationary measure which is asymptotically product with density $\rho_\infty$ but we are still unable to prove it.

\bigskip

The rest of the paper is organized as follow: in section \ref{GraphicalConstruction} we give a construction of the process defined above using a graphical representation similar to that introduced by Harris \cite{Harris1978}. We also introduce the basic coupling technique which is the main tool used in the paper; in section \ref{AttractiveCase} we give some general results on the asymptotic behavior of the TASEP with complex boundary mechanism; finally, in section \ref{ParticularCase} we study a particular example: take a TASEP($\lambda$) on $\ZZp$ and add a source (independent of everything) with density $\epsilon > 0$ which is activated only when site $2$ is occupied. For this model, let $N_t$ be the number of particles in the system at time $t$ when we start from the empty configuration. Then the main result of this paper is the following strong law of large numbers:
\begin{theo} \label{MainResult}
Almost surely,
\begin{equation*}
\lim_{t \rightarrow \infty} \frac{N_t}{t} = \lambda(1-\lambda) + \lambda(1-\lambda)p(\lambda)\epsilon + o(\epsilon),
\end{equation*}
where $p(\lambda)$ is a positive constant (depending only on $\lambda$) for which we give a natural probabilistic interpretation.
\end{theo}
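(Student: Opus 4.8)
\noindent\textit{Sketch of the intended proof.} The plan is to compare the model --- call it $\eta^\epsilon$ --- with the plain TASEP($\lambda$) --- call it $\eta^0$ --- through a monotone coupling, and to read off the first-order effect of the $\epsilon$-source from the second-class particles it produces. Using the graphical construction of Section~\ref{GraphicalConstruction} I would drive $\eta^\epsilon$ and $\eta^0$, both from the empty configuration, by the same bulk Poisson clocks and the same rate-$\lambda$ source clock at site $1$, and add an independent rate-$\epsilon$ clock which, when it rings, inserts a particle into $\eta^\epsilon$ whenever site $1$ is empty and site $2$ occupied in $\eta^\epsilon$. Since this clock only adds particles to $\eta^\epsilon$, attractiveness gives $\eta^\epsilon_t\ge\eta^0_t$ for all $t$, and the discrepancies $\eta^\epsilon_t-\eta^0_t$ form a system of second-class particles. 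A discrepancy is \emph{created} precisely when the $\epsilon$-clock rings successfully (necessarily at site $1$), and is \emph{destroyed} precisely when the rate-$\lambda$ clock rings while a discrepancy sits at site $1$ (then site $1$ is filled in $\eta^0$ and the discrepancy disappears); no other event changes the number of discrepancies. Writing $B_t$ and $C_t$ for the numbers of these two events up to time $t$ and $N^0_t$ for the particle count of $\eta^0$, this gives $N_t=N^0_t+B_t-C_t$.

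I would then turn each term into a strong law of large numbers. By Liggett's Theorem~\ref{th1} with $\rho=0\le 1-\lambda$, $\eta^0_t$ converges to $\nu^\lambda$, and the general ergodic results of Section~\ref{AttractiveCase} give $N^0_t/t\to\lambda(1-\lambda)$ almost surely (this is where the hypothesis $\lambda\le 1/2$ is used, so that $\lambda(1-\lambda)$ is the stationary current). Applying the same machinery to $\eta^\epsilon$ and to the coupled process $(\eta^0,\eta^\epsilon)$ --- a multi-type particle system with first-class particles (those of $\eta^0$) and second-class particles (the discrepancies) --- shows that $B_t/t$ and $C_t/t$ converge almost surely, hence $N_t/t$ converges almost surely to a deterministic constant $j_\infty(\epsilon)$; in particular $D_t\eqdef N_t-N^0_t$, the number of discrepancies alive at time $t$, satisfies $D_t/t\to j_\infty(\epsilon)-\lambda(1-\lambda)$, and $B_t/t\to\epsilon\,\pi_\epsilon$ where $\pi_\epsilon$ is the stationary probability that, in $\eta^\epsilon$, site $1$ is empty and site $2$ occupied.

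Finally I would expand $j_\infty(\epsilon)$ for small $\epsilon$. Then the discrepancies are sparse, created at rate $\epsilon\,\pi_\epsilon=\epsilon\lambda(1-\lambda)+O(\epsilon^2)$, and each one, born at site $1$, evolves in an environment that is within $O(\epsilon)$ of the stationary TASEP($\lambda$); since for $\lambda<1/2$ a second-class particle in a density-$\lambda$ background is transient with speed $1-2\lambda>0$, it escapes to $+\infty$ with probability $p(\lambda)+O(\epsilon)>0$, and otherwise it returns to site $1$ and is absorbed there by a source ring after a finite expected time. Balancing creations against these two outcomes yields $\lim_{t\to\infty} D_t/t=\epsilon\lambda(1-\lambda)\,p(\lambda)+o(\epsilon)$, where $p(\lambda)$ is the probability that a second-class particle inserted at site $1$ of the stationary half-line TASEP($\lambda$) is never absorbed at site $1$ by the rate-$\lambda$ source, equivalently the probability that the particle created by the $\epsilon$-source is not ``redundant'' --- i.e. does not merely pre-empt a particle the $\lambda$-source would have produced anyway. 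Together with $N^0_t/t\to\lambda(1-\lambda)$ this gives the claimed expansion. I expect the main obstacle to be the second step: promoting the convergence of the one-time marginals (Theorem~\ref{th1}) to an almost-sure law of large numbers with deterministic limit for the additive functionals $N^0_t,B_t,C_t$, which requires ergodicity of the unperturbed and of the coupled multi-type processes --- precisely what Section~\ref{AttractiveCase} is designed to supply; granting that, the $\epsilon$-expansion is routine, the only delicate point being the uniform $O(\epsilon)$ control of the environment of a tagged second-class particle, which follows because two second-class particles interact only at rate $O(\epsilon)$.
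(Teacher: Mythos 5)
Your architecture is the same as the paper's: couple with a TASEP($\lambda$), view the surplus particles as second-class particles that are killed at site $1$ by the $\lambda$-source, and read the first-order coefficient as (creation rate)$\times$(survival probability $p(\lambda)$). The decomposition $N_t=N^0_t+B_t-C_t$ is correct, and the use of Proposition~\ref{LLG} for the almost-sure convergence of the additive functionals is exactly what the paper does. However, the two quantitative inputs you need are asserted rather than proved, and the second one is where essentially all of the paper's work lies. First, $\pi_\epsilon=\lambda(1-\lambda)+O(\epsilon)$ is not immediate: the event ``site $1$ empty, site $2$ occupied'' is not monotone, so the sandwich $\nu^\lambda\prec\mu_\infty\prec\nu^{\lambda+\epsilon}$ does not control its stationary probability. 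The paper gets the exact value $\lambda(1-\lambda)$ by noting that the first-class particles alone form an autonomous TASEP($\lambda$) whose minimal stationary law is exactly $\nu^\lambda$, and then bounds the density of discrepancies near the origin via the flux estimates of Theorem~\ref{DensiteCouches}; that theorem rests on a finer hierarchy of particle classes (a class-$j$ particle is created only when a class-$(j-1)$ particle sits at site $2$) together with Lemma~\ref{lem1}, giving a class-$j$ flux of order $\epsilon^{j-1}$. Your two-class coupling lumps all discrepancies together and does not by itself deliver this reduction.

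The more serious gap is the final sentence: ``the uniform $O(\epsilon)$ control of the environment of a tagged second-class particle \ldots follows because two second-class particles interact only at rate $O(\epsilon)$.'' This does not suffice. There are of order $\epsilon t$ second-class particles by time $t$; if each had a probability of order $1$ of having its survival altered by an interaction with its predecessor, the resulting error in the coefficient of $\epsilon$ would itself be of order $1$, i.e.\ comparable to $\lambda(1-\lambda)p(\lambda)$, and the expansion would fail. Interactions are not a priori rare per particle: the $\epsilon$-source fires precisely when site $2$ is occupied, so a new second-class particle can be born immediately adjacent to the previous one. What must be shown --- and what occupies the second half of Section~\ref{ParticularCase} --- is that the probability that a given second-class particle's \emph{survival} is changed by interaction tends to $0$ as $\epsilon\to0$. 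The paper proves this in two steps: (a) by the time the next $\epsilon$-ring occurs (an $\mathrm{Exp}(\epsilon)$ delay, hence typically of order $1/\epsilon$), the previous second-class particle has with high probability either died or reached a distance from which it never returns to a fixed box near the origin (Lemma~\ref{EstimationSurvie}, Corollary~\ref{EstimationSurvieForte}, plus an exponential bound on the time spent in a finite box), so meetings near the origin are unlikely; and (b) meetings far from the origin do not affect survival. The continuity of $p$ in $\lambda$ is also needed to transfer survival probabilities between the bounding processes $\eta^{inf}_.$ and $\eta^{sup}_.$. Without these ingredients the passage from your non-interacting heuristic to the true process is a genuine hole in the argument.
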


It should be noted that this particular choice of boundary mechanism is rather arbitrary, and that our method is robust enough to be used in a much larger generality. However, the notations which would be needed would be much more tedious, while providing very little additional insight into the model --- so we choose to limit ourselves to one representative case.

\section*{Acknowledgments}
I wish to thank Vincent Beffara for his help and suggesting this work, and Stefan Gro\ss kinsky for useful discussions.


\section{The Harris construction} \label{GraphicalConstruction}

We will use the method developed by Harris  \cite{Harris1978} to construct our process. Let
\[
\Ncal \eqdef \left( \Ncal_x, \Ncal_{\eta,\eta'}, x \in \ZZp, \eta, \eta' \in \left\{0,1\right\}^{\left\{1,\ldots, R\right\}} \right),
\]
be a family of independent Poisson point processes on $\RRp$ constructed on the same probability space $\left( \Gamma,\mathcal{F},\mathbf{P} \right)$, such that the rate of processes indexed by $\ZZ_+$ is 1 and the rate of the process indexed by $\left( \eta,\eta' \right)$ is $d_{\eta,\eta'} \geq 0$. By discarding a $\mathbf{P}$--null set, we may assume that
\begin{equation} \label{hyp}
\parbox{\lgr}{each poisson point process in $\Ncal$ has only finitely many jump times in every bounded interval $\left[ 0,T \right]$, and no two distinct processes have a jump in common.}
\end{equation}
We denote
\[
\Ncal_0 \eqdef \bigcup_{\eta, \eta' \in \left\{ 0,1 \right\}^{\left\{ 1,\ldots,R \right\}}} \Ncal_{\eta,\eta'}
\]

Fix $T > 0$ and $\eta \in X$. The process $\left( \eta_t \right)_{0 \leq t \leq T}$ starting from $\eta$ is now constructed as follows. Consider the following subgraph of $\ZZ_+$:
\[
\mathcal{G}_T \eqdef \Bigl\{ \left\{ x,x+1 \right\} : x \geq R, \Ncal_x \cap \left[ 0,T \right] \neq \emptyset \Bigr\} \bigcup \Bigl\{ \left\{ x,x+1 \right\} : x \in \left\{ 0,\ldots,R-1 \right\} \Bigr\}.
\]
It is easy to see that every connected component of $\mathcal{G}_T$ is almost surely finite. Let $\Gamma_0$ be the subset of $\Gamma$ such that \eqref{hyp} and the above condition hold for all $T \geq 0$. Then we have $\Gamma_0 \in \mathcal{F}$ and $\prob{\Gamma_0} = 1$. We consider now only $\omega \in \Gamma_0$. For every connected component $\mathcal{C}$ of $\mathcal{G}_T$, the set $\left( \cup_{x \in \mathcal{C}} \Ncal_x\right) \bigcap \left[ 0,T \right]$ is finite so its elements can be ordered chronologically $\tau_1 < \ldots < \tau_n$ and we need only to describe the action of each of them. We start with the configuration $\eta$:
\[
\eta_t(x) := \eta(x)
\] for all $x \in \mathcal{C}$ and $0 \leq t < \tau_1$.\\
Suppose that the process is constructed on $\mathcal{C}$ for $0 \leq t < \tau_k$ and $k \in \left\{1,\ldots, n\right\}$. Then:
\begin{itemize}

\item if $\tau_k \in \Ncal_{\xi,\xi'}$ and if $\eta_{\tau_k^- |S_R} = \xi$ then $\eta_{\tau_k |S_R} \eqdef \xi'$ and $\eta_{\tau_k}(x) \eqdef \eta_{\tau_k^-}(x)$ for all $x \in \mathcal{C}\backslash S_R$,\\

\item if $\tau_k \in \Ncal_{\xi,\xi'}$ and if $\eta_{\tau_k^- |S_R} \neq \xi$ then $\eta_{\tau_k}(x) \eqdef \eta_{\tau_k^-}(x)$ for all $x \in \mathcal{C}$,\\

\item if $\tau_k \in \Ncal_x$ and $\eta_{\tau_k^-}(x) \( 1-\eta_{\tau_k^-}(x+1) \) = 1$ then $\eta_{\tau_k} \eqdef \( \eta_{\tau_k^-} \)_{x,x+1}$ on $\mathcal{C}$,\\

\item  if $\tau_k \in \Ncal_x$ and $\eta_{\tau_k^-}(x) \( 1-\eta_{\tau_k^-}(x+1) \) \neq 1$ then $\eta_{\tau_k} \eqdef \eta_{\tau_k^-}$ on $\mathcal{C}$,\\

\end{itemize}
Finally, we put $\eta_t \eqdef \eta_{\tau_k}$ on $\mathcal{C}$ for $\tau_k \leq t < \tau_{k+1}$ if $k < n$ and for $\tau_n \leq t \leq T$ if $k=n$. We make the same construction on every connected component of $\mathcal{G}_T$ and then let $T$ go to infinity to get the process $\( \eta_t \)_{t \geq 0}$ for every $\omega \in \Gamma_0$.

The usefulness of such a construction is that, using the same Harris process, we can construct two or more realizations of the process on the same probability space starting from different initial configurations. We will refer to this coupling as the \emph{basic coupling}.


\section{The attractive case} \label{AttractiveCase}

In this section, we consider the process with the generator:
\begin{align} \label{gscTASEP2}
\begin{split}
\Omega f(\eta) &\eqdef \sum_{x \in \ZZ^*_+} \eta(x) \left( 1-\eta(x+1) \right) \left[ f(\eta_{x,x+1}) - f(\eta) \right]\\
		        &\hspace{4mm} + \sum_{\xi, \xi' \in X_R} d_{\xi,\xi'} \indic{\eta_{|S_R}=\xi} \left[ f(\xi'\cup\eta_{|^cS_R}) - f(\eta) \right],
\end{split}
\end{align}
and we assume the process is attractive.


\subsection{The stationary measure}

\begin{prop} \label{pr2}
Suppose that the process is monotone (or attractive). Then, starting from the empty configuration, the measure at time $t$ of the process, say $\mu_t$, is stochastically increasing and converges to a measure $\mu_{\infty} \in \mathcal{I}$ which is the smallest invariant measure of the process. Furthermore, $\mu_{\infty} \in \mathcal{I}_e$ and $\mu_{\infty}$ is ergodic.
\end{prop}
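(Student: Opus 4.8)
The plan is to run the standard attractiveness argument, using the basic coupling from Section~\ref{GraphicalConstruction}. Write $\mu_t=\delta_{\mathbf 0}S(t)$, where $\delta_{\mathbf 0}$ is the Dirac mass at the empty configuration $\mathbf 0$, $S(t)$ the semigroup attached to \eqref{gscTASEP2}, and let $\preceq$ denote stochastic domination on $X$. Since $\mathbf 0$ is the minimal configuration, $\delta_{\mathbf 0}\preceq\mu_s$ for every $s\geq 0$; applying $S(t)$, which preserves $\preceq$ because the process is attractive, gives $\mu_t=\delta_{\mathbf 0}S(t)\preceq\mu_sS(t)=\mu_{s+t}$. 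Hence $t\mapsto\mu_t$ is stochastically increasing --- concretely, one couples a copy started from $\mathbf 0$ with a copy started from the time-$s$ configuration and checks that the basic coupling keeps the first below the second at all later times.

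First I would deduce convergence. For every increasing cylinder function $f$ --- in particular for each $f$ of the form $\prod_{x\in A}\eta(x)$ with $A$ finite --- the map $t\mapsto\langle f\rangle_{\mu_t}$ is nondecreasing and bounded, hence converges. The functions $\prod_{x\in A}\eta(x)$ span the cylinder functions, which are dense in $C(X,\RR)$, and the functionals $\langle\cdot\rangle_{\mu_t}$ have norm $1$, so a $3\varepsilon$-argument upgrades this to convergence of $\langle f\rangle_{\mu_t}$ for every $f\in C(X,\RR)$; by the Riesz representation theorem the limit is $\langle f\rangle_{\mu_\infty}$ for some probability measure $\mu_\infty$, i.e. $\mu_t$ converges weakly to $\mu_\infty$. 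Using the Feller property of $S(s)$ (so that $S(s)f\in C(X,\RR)$) one then gets
\[
\langle f\rangle_{\mu_\infty S(s)}=\langle S(s)f\rangle_{\mu_\infty}=\lim_{t\to\infty}\langle S(s)f\rangle_{\mu_t}=\lim_{t\to\infty}\langle f\rangle_{\mu_{t+s}}=\langle f\rangle_{\mu_\infty}
\]
for all $f\in C(X,\RR)$ and all $s\geq 0$, so $\mu_\infty S(s)=\mu_\infty$, i.e. $\mu_\infty\in\mathcal I$.

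Next I would establish minimality and then extremality. If $\nu\in\mathcal I$, then $\delta_{\mathbf 0}\preceq\nu$ and attractiveness give $\mu_t=\delta_{\mathbf 0}S(t)\preceq\nu S(t)=\nu$; letting $t\to\infty$ yields $\mu_\infty\preceq\nu$, so $\mu_\infty$ is the smallest element of $\mathcal I$. For extremality, suppose $\mu_\infty=\alpha\nu_1+(1-\alpha)\nu_2$ with $\nu_1,\nu_2\in\mathcal I$ and $0<\alpha<1$. By minimality $\mu_\infty\preceq\nu_1$ and $\mu_\infty\preceq\nu_2$, so $\langle f\rangle_{\mu_\infty}\leq\langle f\rangle_{\nu_i}$ for every increasing cylinder $f$; but $\alpha\langle f\rangle_{\nu_1}+(1-\alpha)\langle f\rangle_{\nu_2}=\langle f\rangle_{\mu_\infty}$ with $\alpha,1-\alpha>0$ forces $\langle f\rangle_{\nu_1}=\langle f\rangle_{\nu_2}=\langle f\rangle_{\mu_\infty}$. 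Since increasing cylinder functions are distribution-determining on $X$, $\nu_1=\nu_2=\mu_\infty$, so $\mu_\infty\in\mathcal I_e$. Ergodicity of $\mu_\infty$ is then the general fact that, for a Feller process, the extreme points of $\mathcal I$ are exactly the stationary measures under which the process is ergodic with respect to time shifts (see \cite{Liggett1985}).

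The argument is a concatenation of standard facts, so I do not expect a genuine obstacle; the only steps needing care are the passage from stochastic monotonicity to weak convergence and the invariance of the limit, both of which rest on the compactness of $X$ and on the Feller property of the semigroup built in Section~\ref{GraphicalConstruction}.
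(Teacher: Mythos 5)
Your proof is correct and follows essentially the same route as the paper: the same coupling argument for stochastic monotonicity, the same minimality argument via $\delta_{\mathbf 0}\prec\nu$ for every $\nu\in\mathcal I$, the same convexity-plus-domination argument for extremality, and the same appeal to the standard fact (Theorem~B52 of \cite{Liggett1999}) that extremal invariant measures are ergodic. You simply spell out in more detail the steps the paper leaves implicit, namely the passage from monotonicity of $t\mapsto\langle f\rangle_{\mu_t}$ on increasing cylinder functions to weak convergence, and the invariance of the limit via the Feller property.
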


\begin{proof}
Let $0 \leq s < t$. We have $\delta_0 \prec \mu_{t-s}$. Thus by attractivity of the process, we have  $\delta_0S(s) \prec \mu_{t-s}S(s)$, i.e.,  $\mu_s \prec \mu_t$. Hence, by monotonicity, $\mu_t$ converges weakly to an invariant measure $\mu_{\infty}$.

For all $\nu \in \mathcal{I}$, we have $\delta_0 \prec \nu$, which implies that $\mu_t \prec \nu$ for all $t \geq 0$, and then $\mu_{\infty} \prec \nu$. Assume now that $\mu_{\infty} = \lambda\nu_1 + \( 1-\lambda \)\nu_2$, with $\nu_1, \nu_2 \in \mathcal{I}$ and $\lambda \in \left] 0,1 \right[$. We have $\mu_{\infty} = \lambda\nu_1 + \( 1-\lambda \)\nu_2 \succ \mu_{\infty}$, thus $\nu_1 = \nu_2 = \mu_{\infty}$ and $\mu_{\infty}$ is extremal. Finally, by Theorem B52 of \cite{Liggett1999}, $\mu_{\infty}$ is also ergodic.
\end{proof}

\begin{prop} \label{exist}
$\tau^R \mu_\infty$ is stochastically dominated by the product Bernoulli measure with density $\frac{1}{2}$.\end{prop}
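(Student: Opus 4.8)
The plan is to compare the process restricted to sites $\{R+1, R+2, \ldots\}$ with an ordinary TASEP fed by a reservoir of density at most $\tfrac12$, and then invoke Liggett's Theorem 1.1 (the maximum current phase) to control the stationary measure of that comparison process. Concretely, I would first observe that, from the point of view of sites $\geq R+1$, the only influence of the boundary block $S_R$ is through the indicator of whether site $R$ is occupied at the moments when the Poisson clock $\Ncal_R$ rings: a particle can cross the edge $\{R, R+1\}$ only when $R$ is occupied and $R+1$ is empty. So the input stream into site $R+1$ is dominated by the stream one would get if site $R$ were \emph{always} occupied, i.e.\ if one attached a density-$1$ reservoir directly at site $R+1$; equivalently, $\tau^R \eta_t$ is dominated, in the basic coupling, by a TASEP($1$) started from the empty configuration. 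Hence $\tau^R \mu_\infty \prec \mu^{1}_\infty$, where $\mu^{1}_\infty$ is the stationary measure of TASEP($1$) obtained as the increasing limit from $\delta_0$.

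The second step is to identify $\mu^{1}_\infty$. Since $\lambda = 1 \geq \tfrac12$ and the initial measure $\delta_0$ is a product measure with asymptotic density $\rho = 0 \leq \tfrac12$, Theorem 1.1 puts us in the maximum current regime, so $\delta_0 S_1(t) \to \mu^{1}_{1/2}$, and by Proposition 2.3 this limit is exactly $\mu^{1}_\infty$. The measure $\mu^{1}_{1/2}$ is asymptotically product with density $\tfrac12$; what I actually need is the stronger statement that it is itself stochastically dominated by $\nu^{1/2}$. For this I would use the known explicit description of the TASEP stationary measures with a reservoir: $\mu^{\lambda}_{1/2}$ in the maximum-current phase is, site by site, stochastically dominated by $\nu^{1/2}$ — intuitively, the maximal-current measure never has local density exceeding $\tfrac12$ in the stochastic-domination sense. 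Chaining the two dominations gives $\tau^R \mu_\infty \prec \nu^{1/2}$, which is the claim.

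Alternatively — and perhaps more cleanly, avoiding reliance on the fine structure of $\mu^{\lambda}_{1/2}$ — I would argue directly with a second comparison: couple $\tau^R \eta_t$ (equivalently the TASEP($1$) process) with the process on $\ZZ$ started from the configuration that is empty on $\ZZp$ and full on $-\ZZ_+$ (the step initial condition), using the graphical construction. The half-line process with a density-$1$ source is dominated by the $\ZZ$-process observed on $\ZZp$, and for the latter the hydrodynamic/stationarity results for TASEP on $\ZZ$ say that the time-$t$ law on any fixed site converges to something dominated by $\nu^{1/2}$; since product Bernoulli measures of density $\leq \tfrac12$ are the translation-invariant extremal stationary measures reachable from below, the limit is $\prec \nu^{1/2}$. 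Passing to the limit $t \to \infty$ (using that stochastic domination is preserved under weak limits) yields the result.

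The main obstacle is the last link in each chain: upgrading "asymptotically product with density $\tfrac12$" (which is all Theorem 1.1 literally gives) to the genuine stochastic domination $\mu^{1}_\infty \prec \nu^{1/2}$ on all of $\ZZp$, uniformly near the boundary. This requires knowing that the boundary effects of the density-$1$ reservoir only \emph{decrease} occupation relative to $\nu^{1/2}$, never increase it — plausible since a density-$1$ source produces a queue that relaxes to the maximal current $\tfrac14$, corresponding to density $\tfrac12$, and the relaxation approaches this from the low-density side near the source. I would handle this either by citing the explicit formula for $\mu^{\lambda}_\rho$ from Liggett's work (from which the domination can be read off) or by the $\ZZ$-coupling in the previous paragraph, where the monotonicity is manifest in the graphical construction. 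The rest of the argument — the first domination step via "site $R$ always occupied" and the stability of $\prec$ under the weak limit defining $\mu_\infty$ — is routine given Proposition 2.3 and the basic coupling.
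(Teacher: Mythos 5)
Your first step --- comparing the process beyond site $R$ with a TASEP$(1)$ started from the empty configuration by pretending that site $R$ is permanently occupied --- is exactly the paper's argument, and it is sound: in the basic coupling it yields $\tau^R\mu_\infty \prec \mu^{1}_{1/2}$, where $\mu^{1}_{1/2}=\lim_t\delta_0 S_1(t)$ is the maximal-current measure of Theorem \ref{th1}. You have also put your finger on the genuine weak point, namely upgrading ``asymptotically product with density $\tfrac12$'' to an honest stochastic domination by $\nu^{1/2}$ on all of $\ZZp$ (a leap the paper's own proof makes silently, by writing $\nu^{1/2}$ where Theorem \ref{th1} delivers $\mu^{1}_{1/2}$).

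The problem is that neither of your proposed ways of closing that gap works, because the domination $\mu^{1}_{1/2}\prec\nu^{1/2}$ is in fact \emph{false} near the boundary. Under any stationary measure the bond currents $\langle\eta(x)(1-\eta(x+1))\rangle$ are all equal and also equal the injection current $1\cdot\langle 1-\eta(1)\rangle$; since $\tau^x\mu^{1}_{1/2}\to\nu^{1/2}$ the common value is $\tfrac14$, whence $\langle\eta(1)\rangle_{\mu^{1}_{1/2}}=\tfrac34>\tfrac12$. So the maximal-current profile relaxes to $\tfrac12$ from \emph{above} near the source, contrary to your heuristic, and testing against the increasing function $\eta\mapsto\eta(1)$ kills the claimed domination. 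Your alternative $\ZZ$-coupling is oriented the wrong way as well: the half-line process with a rate-$1$ source creates a particle at site $1$ whenever that site is empty, whereas the step-initial-condition process on $\ZZ$ does so only when, in addition, site $0$ is occupied; hence the half-line process \emph{dominates} the restriction to $\ZZp$ of the $\ZZ$ process, not conversely. What your (and the paper's) coupling really proves is the weaker statement $\tau^R\mu_\infty\prec\mu^{1}_{1/2}$; since $\tau^x\mu^{1}_{1/2}\to\nu^{1/2}$, this still forces every weak limit $\tilde\mu_\infty$ of $\tau^k\mu_\infty$ to satisfy $\tilde\mu_\infty\prec\nu^{1/2}$, which is all that is needed in Proposition \ref{invtasep} --- but it does not give the proposition as literally stated, and no argument of this type can: for a boundary mechanism that keeps $S_R$ permanently full one has $\tau^R\mu_\infty=\mu^{1}_{1/2}$ exactly, so the statement itself fails without further restriction on the rates $d_{\xi,\xi'}$.
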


\begin{proof}
Define $\Ncal' \eqdef \( \Ncal'_x, x \in \ZZ_+ \)$, where $\Ncal'_x \eqdef \Ncal_{x+R}$. Then $\Ncal'$ defines a TASEP $\( \xi_t \)$ on $\ZZp$ with rate $1$ of particle apparition in $1$. By theorem \ref{th1}, starting from the empty configuration, the distribution at time $t$ converges to $\nu^{\frac{1}{2}}$. In this coupling, we have $\xi_t(x) \geq \eta_t(x+R)$ almost surely for all $t \geq 0$ and $x \geq 1$. Thus the restriction of $\mu_{\infty}$ to $\left\{ R+1, R+2,\ldots \right\}$ is stochasticaly dominated by $\nu^{\frac{1}{2}}$.
\end{proof}


\subsection{Asymptotic measures}

Let us extend the measure $\mu_\infty$ on a measure on $\{0,1\}^\ZZ$ by
\begin{equation*}
\overline{\mu}_\infty (A) \eqdef \mu_\infty \left\{ \eta \in X : \tilde{\eta} \in A \right\},
\end{equation*}
where $\tilde{\eta}(x) \eqdef \begin{cases}
						\eta(x) \text{ if } x \geq 1,\\
						0 \text{ otherwise},
				   	    \end{cases}$\\
for all A in the cylindric field of $\{0,1\}^\ZZ$. We will still denote this measure $\mu_\infty$. Let $\tilde{\mu}_k \eqdef \tau^k \mu_\infty$ and consider any weak limit $\tilde{\mu}_\infty$ of this sequence:
\begin{equation*}
\lim_{i \to \infty} \tilde{\mu}_{k_i} = \tilde{\mu}_\infty.
\end{equation*}

\begin{prop} \label{invtasep}
The measure $\tilde{\mu}_\infty$ is a translation invariant stationary measure for the TASEP on $\ZZ$. Consequently, it is a mixture of product Bernoulli measures, i.e., there exists a probability measure $\sigma$ on $\left[ 0,1 \right]$ such that
\begin{equation*}
\tilde{\mu}_\infty = \int_0^1 \nu^\lambda \sigma(d\lambda).
\end{equation*}
\end{prop}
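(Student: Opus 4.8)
The plan is to verify two properties of $\tilde\mu_\infty$: that it is invariant for the generator $\mathcal L$ of the (nearest-neighbour, totally asymmetric) exclusion process on $\ZZ$, and that it is translation invariant; the representation $\tilde\mu_\infty=\int_0^1\nu^\lambda\sigma(d\lambda)$ then follows from \cite{Liggett1976}. For invariance, fix a cylinder function $f$ on $\{0,1\}^\ZZ$ with support in $\{-m,\ldots,m\}$. Using the definition of the action of $\tau$ on measures and the commutation of $\mathcal L$ with $\tau$,
\[
\langle\mathcal Lf\rangle_{\tilde\mu_{k_i}}=\langle\tau^{k_i}(\mathcal Lf)\rangle_{\mu_\infty}=\langle\mathcal L(\tau^{k_i}f)\rangle_{\mu_\infty}.
\]
As soon as $k_i>m+R$ the function $\tau^{k_i}f$ depends only on coordinates $\geq R+1$, so the boundary part of $\Omega$ annihilates it, every bond occurring in $\mathcal L(\tau^{k_i}f)$ lies in $\ZZp$, and the bulk part of $\Omega$ acts on $\tau^{k_i}f$ exactly as $\mathcal L$ does. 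Hence $\mathcal L(\tau^{k_i}f)=\Omega(\tau^{k_i}f)$, and $\langle\Omega(\tau^{k_i}f)\rangle_{\mu_\infty}=0$ because $\mu_\infty\in\mathcal I$ by Proposition \ref{pr2}. Since $\mathcal Lf$ is itself a cylinder function, letting $i\to\infty$ with $\tilde\mu_{k_i}\to\tilde\mu_\infty$ gives $\langle\mathcal Lf\rangle_{\tilde\mu_\infty}=0$ for every cylinder $f$; thus $\tilde\mu_\infty$ is invariant for the TASEP on $\ZZ$.

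For translation invariance I would first transfer the estimate of Proposition \ref{exist} to the limit: in the coupling used there one has $\eta_t(x+R)\le\xi_t(x)$ for the homogeneous TASEP $(\xi_t)$ started from the empty configuration, so the restriction of $\mu_\infty$ to $\{R+1,R+2,\ldots\}$ is stochastically dominated by $\nu^{\frac{1}{2}}$; consequently, for any fixed finite window the restriction of $\tau^{k_i}\mu_\infty$ to that window is dominated by $\nu^{\frac{1}{2}}$ once $k_i$ is large, and, the left boundary escaping to $-\infty$, this yields $\tilde\mu_\infty\prec\nu^{\frac{1}{2}}$ on all of $\{0,1\}^\ZZ$. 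Now I would use the classification of invariant measures of the TASEP on $\ZZ$ (see \cite{Liggett1999}): each is of the form $\int_{[0,1]}\nu^\rho\,\sigma(d\rho)+\sum_{n\in\ZZ}c_n\,\delta_{\zeta_n}$, where $\zeta_n$ is the configuration occupying exactly $\{n,n+1,\ldots\}$. Testing, for each $x\in\ZZ$ and $N\geq0$, the increasing cylinder function $g_{N,x}=\prod_{j=0}^{N}\eta(x+j)$ against $\tilde\mu_\infty$: the bound $\langle g_{N,x}\rangle_{\tilde\mu_\infty}\le 2^{-(N+1)}$ coming from $\tilde\mu_\infty\prec\nu^{\frac{1}{2}}$, together with $\langle g_{N,x}\rangle_{\tilde\mu_\infty}=\int\rho^{N+1}\sigma(d\rho)+\sum_{n\le x}c_n$, forces (letting $N\to\infty$, then $x\to-\infty$) all $c_n=0$ and $\sigma(\{1\})=0$, and in fact $\sigma$ to be supported on $[0,\frac{1}{2}]$. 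Hence $\tilde\mu_\infty=\int_{[0,\frac{1}{2}]}\nu^\rho\,\sigma(d\rho)$, which is translation invariant; this already exhibits the mixture, so the appeal to \cite{Liggett1976} can be bypassed (alternatively, having recorded translation invariance, one quotes \cite{Liggett1976} for the mixture form).

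The invariance step is routine; the translation invariance is the substantive point. In the Poisson-boundary case Theorem \ref{th1} provides convergence of the whole sequence $\tau^k\mu_\infty$, which trivially makes the limit translation invariant, whereas here only subsequential limits are available and I see no direct way to show that the consecutive translates $\tau^k\mu_\infty$ and $\tau^{k+1}\mu_\infty$ become close --- this would be essentially as strong as knowing $\mu_\infty$ to be asymptotically homogeneous, which the paper does not claim. The argument above circumvents the difficulty by using only the a priori $\nu^{\frac{1}{2}}$-domination together with the known structure of the invariant set on $\ZZ$; the delicate point is to carry out the extremal decomposition, and the exclusion of the frozen (``blocking'') components, at the level of $\tilde\mu_\infty$ itself rather than of its one-site marginals.
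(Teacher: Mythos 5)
Your proposal is correct and follows essentially the same route as the paper: the invariance of $\tilde{\mu}_\infty$ is obtained by shifting cylinder functions past the boundary range $R$ and using $\mu_\infty \in \mathcal{I}$, and translation invariance plus the Bernoulli-mixture form are then deduced from the classification $\mathcal{I}_e = \left\{ \nu^\lambda \right\} \cup \left\{ \nu_n \right\}$ of \cite{Liggett1976} combined with the $\nu^{\frac{1}{2}}$-domination of Proposition \ref{exist}. You merely spell out the exclusion of the blocking components (which the paper leaves implicit), and your closing discussion correctly identifies why the classification is needed in place of a direct homogeneity argument.
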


\begin{proof}
Let $\Omega^e$ be the generator of the TASEP on $\ZZ$. For any cylinder function $f : \left\{ 0,1 \right\}^\ZZ \to \RR$, let $x \in \ZZp$ large enough such that $supp \hspace{1mm} \tau^x f \subset \left\{ R+1,R+2,\ldots \right\}$, where $supp f$ is the support of $f$. Thus $\tau^x f$ could be considerate has a function on $\ZZp$ and we can apply the generator $\Omega$ to this function. We get $\Omega \tau^y f = \Omega^e \tau^y f$ for all $y \geq x$. But it is easy to see that $\Omega^e$ and $\tau$ commute, thus we have
\begin{align*}
\int \Omega \tau^y f \mu_\infty(d\eta) = 0 &= \int \tau^y \Omega^e f \mu_\infty(d\eta),\\
							       &= \int \Omega^e f \tilde{\mu}_y(d\eta).
\end{align*}
Hence for $i$ large enough, $\left\langle \Omega^e f \right\rangle_{\tilde{\mu}_i} = 0$, which implies that $\left\langle \Omega^e f \right\rangle_{\tilde{\mu}_\infty} = 0$. This is true for arbitrary $f$ thus $\tilde{\mu}_\infty$ is invariant for the TASEP on $\ZZ$. We know that for this model we have $\mathcal{I}_e = \left\{ \nu^\lambda, \lambda \in \left[ 0,1 \right] \right\} \cup \left\{ \nu_n, n \in \ZZ \right\}$, where $\nu_n = \tau^n\nu_0$ and $\nu_0$ is the Dirac measure of the configuration for which all the sites $x \geq 0$ are occupied and all the sites $x < 0$ are empty (see \cite{Liggett1976}). Using Proposition \ref{exist}, $\tilde{\mu}_\infty$ is stochastically dominated by $\nu^{\frac{1}{2}}$, thus $\tilde{\mu}_\infty$ is translation invariant and is a mixture of product Bernoulli measures.
\end{proof}


\subsection{A strong law of large numbers}

Let $\mu$ be an invariant and ergodic measure for the process with the generator given by \eqref{gscTASEP2}. Fix $\xi_0, \xi$ and $\xi'$ three configurations on $S_R$ and consider
\[
N(t) \eqdef \sharp \( \Ncal_{\xi, \xi'} \cap I_t \),
\]
where $I_t \eqdef \left\{ s \in \left[ 0,t \right] : \eta_{s|S_R} = \xi_0 \right\}$. We will show a strong law of large numbers for $N(t)$ which will be useful in the sequel.

\begin{prop} \label{LLG}
If the process starts from $\mu$ and if $\xi' \neq \xi_0$, then almost surely:
\begin{equation*}
\lim_{t \to \infty} \frac{N_t}{t} = d_{\xi, \xi'} \mu \left\{ \eta \in X : \eta_{s|S_R} = \xi_0 \right\}.
\end{equation*}
\end{prop}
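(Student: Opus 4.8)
The plan is to split $N_t$ into a predictable part, which is $d_{\xi,\xi'}$ times the amount of time the restricted configuration has spent at the value $\xi_0$, plus a martingale which will be shown to grow sublinearly; the ergodic theorem then handles the first term and a martingale law of large numbers the second. Throughout, set $A\eqdef\{\eta\in X:\eta_{|S_R}=\xi_0\}$ and let $(\mathcal{F}_t)_{t\geq0}$ be the filtration generated by the restrictions to $[0,t]$ of all the Poisson point processes in the family $\Ncal$. With respect to $(\mathcal{F}_t)$ the trajectory $(\eta_s)_{s\leq t}$ is adapted and the left-limit process $s\mapsto\eta_{s^-}$ is predictable; moreover $\Ncal_{\xi,\xi'}$, having independent increments and being independent of every other process in $\Ncal$, is an $(\mathcal{F}_t)$-Poisson process with deterministic $(\mathcal{F}_t)$-compensator $t\mapsto d_{\xi,\xi'}t$.

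The one subtle point is to express $N_t$ as a stochastic integral against $\Ncal_{\xi,\xi'}$: a ring of $\Ncal_{\xi,\xi'}$ at time $s$ belongs to $I_t$ essentially when $\eta_{s^-}\in A$, the role of the hypothesis $\xi'\neq\xi_0$ being to ensure that a ring which actually performs the transition $\xi\to\xi'$ --- and thus leaves the restricted configuration equal to $\xi'\neq\xi_0$ --- is correctly excluded, while at every other ring $\eta_s=\eta_{s^-}$. Granting
\[
N_t=\int_0^t\indic{\eta_{s^-}\in A}\,d\Ncal_{\xi,\xi'}(s),
\]
the process $M_t\eqdef N_t-d_{\xi,\xi'}\int_0^t\indic{\eta_{s^-}\in A}\,ds$ is a mean-zero $(\mathcal{F}_t)$-martingale, square-integrable because its predictable quadratic variation $\langle M\rangle_t=d_{\xi,\xi'}\int_0^t\indic{\eta_{s^-}\in A}\,ds$ is at most $d_{\xi,\xi'}t$.

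It then remains to treat the two terms separately. Since $\mu$ is invariant, $(\eta_s)_{s\geq0}$ is a stationary process, and since $\mu$ is ergodic (by hypothesis) this stationary process is ergodic for the time-shift; the ergodic theorem in continuous time, applied to the bounded functional $\eta\mapsto\indic{\eta\in A}$, gives
\[
\frac{1}{t}\int_0^t\indic{\eta_{s^-}\in A}\,ds\;\tend{t}\;\mu(A)\qquad\text{almost surely}
\]
(the c\`adl\`ag and left-limit integrands agree off a countable set of times). For the martingale part, $\esp{M_t^2}=\esp{\langle M\rangle_t}\leq d_{\xi,\xi'}t$ gives $\esp{(M_{n^2}/n^2)^2}\leq d_{\xi,\xi'}n^{-2}$, so $M_{n^2}/n^2\to0$ almost surely by Borel--Cantelli; one fills the gaps between $n^2$ and $(n+1)^2$ using that $t\mapsto N_t$ and $t\mapsto\langle M\rangle_t$ are nondecreasing and that $\Ncal_{\xi,\xi'}((n^2,(n+1)^2])=o(n^2)$ almost surely (strong law for the Poisson process), or simply invokes the strong law of large numbers for martingales, valid here because $\langle M\rangle_t=O(t)$. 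Adding the two limits yields $N_t/t\tend{t}d_{\xi,\xi'}\mu(A)$.

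The step I expect to be the real obstacle is the second one: translating the pathwise Harris construction into this clean predictable decomposition --- keeping exact track of which rings of $\Ncal_{\xi,\xi'}$ lie in $I_t$, being careful about c\`adl\`ag values versus predictable left limits at jump times, and verifying the compensator identity --- since this is the only place where $\xi'\neq\xi_0$ genuinely enters and where a careless argument goes wrong. Should this bookkeeping prove awkward, an alternative I would keep in reserve is a Palm-calculus argument: the pair $\big((\eta_s)_{s\geq0},\Ncal_{\xi,\xi'}\big)$ is stationary and ergodic, so the ergodic theorem for marked point processes identifies $\lim_t N_t/t$ as the intensity $d_{\xi,\xi'}$ times the Palm probability that a point of $\Ncal_{\xi,\xi'}$ sees $\eta\in A$, and, by the PASTA property --- the increments of $\Ncal_{\xi,\xi'}$ after time $s$ being independent of $\mathcal{F}_s\ni\eta_{s^-}$ --- that Palm probability is the time-stationary value $\mu(A)$.
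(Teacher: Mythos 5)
Your proof is correct, but it takes a genuinely different route from the paper's. The paper performs a time change: it sets $T_t=\int_0^t\indic{\eta_{s|S_R}=\xi_0}\,ds$ and $\psi=T^{-1}$, argues via the memoryless property and the independence of $\Ncal_{\xi,\xi'}$ from the rest of the Harris system that $\psi^{-1}(\Ncal_{\xi,\xi'}\cap I)$ is a homogeneous Poisson process of rate $d_{\xi,\xi'}$, and concludes by sandwiching $N(t)$ between $N'(T_t)$ and $N'(T_t+\epsilon)$, combining the strong law for Poisson processes with the ergodic theorem for $T_t/t$. You instead keep the original clock and use the semimartingale decomposition $N_t=M_t+d_{\xi,\xi'}\int_0^t\indic{\eta_{s^-}\in A}\,ds$, dispatching the drift by the same ergodic theorem and the martingale by $\langle M\rangle_t\leq d_{\xi,\xi'}t$ together with Borel--Cantelli along $t=n^2$ (or the martingale strong law). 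The two arguments share their core --- the ergodic theorem for the occupation time, plus the fact that $\Ncal_{\xi,\xi'}$ rings at constant rate independently of the configuration, which is simultaneously what makes $d_{\xi,\xi'}\int_0^t\indic{\eta_{s^-}\in A}\,ds$ the compensator of $N$ and what makes the time-changed process Poisson --- but yours replaces the paper's somewhat informal ``the next point is $t$ plus an exponential and then the process restarts'' step by a predictable-compensator computation, at the price of invoking stochastic calculus for point processes where the paper stays elementary and pictorial. Two minor remarks. First, on the bookkeeping you flagged: the paper's $I_t$ literally tests the c\`adl\`ag value $\eta_s$ rather than the left limit, so a ring that fires the transition is excluded even when $\eta_{s^-|S_R}=\xi_0$; this only matters in the degenerate case $\xi_0=\xi$, where the literal $N_t$ would vanish identically, and your representation $N_t=\int_0^t\indic{\eta_{s^-}\in A}\,d\Ncal_{\xi,\xi'}(s)$ is precisely the (left-limit) version for which the proposition, and indeed the paper's own proof, actually holds --- it coincides with the paper's definition whenever $\xi_0\neq\xi$. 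Second, you should adjoin $\sigma(\eta_0)$ to $\mathcal{F}_0$ so that $(\eta_s)$ is adapted; this changes nothing since the initial configuration is independent of the Harris system.
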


\begin{proof}
Let
\[
T_t \eqdef \int_0^t \indic{\eta_{s|S_R} = \xi_0} ds,
\]
and
\[
\psi(t) \eqdef \inf \left\{ s \geq 0 : T_s = t \right\}.
\]
Since $\mu$ is ergodic, $\frac{T_t}{t} \tend{t} \mu \left\{ \eta \in X : \eta_{s|S_R} = \xi_0 \right\}$ almost surely. Let $I \eqdef \{t \geq 0 : \eta_{t|S_R} = \xi_0\}$. $\psi : \RR_+ \to I$ is a one to one map so we can define $\mathcal{M} \eqdef \psi^{-1}(\Ncal_{\xi,\xi'} \cap I)$ and $N'(t)$ the associated counting process. We have now that $N'(t) = N(\psi(t))$ almost surely and $\mathcal{M}$ is a Poisson point process with parameter $d_{\xi,\xi'}$. Indeed, for all $t \geq 0$, the next point in $\mathcal{M}$ after $t$ is equal to $t$ plus an exponential variable with parameter $d_{\xi,\xi'}$, and after this point, the process is independent of the past (see Figure \ref{fig:LGN}). Then, it is easy to see that for every $\epsilon > 0$, $\psi(T_t) \leq t \leq \psi(T_t + \epsilon)$. Since $N(t)$ is non-decreasing, we get $N'(T_t) \leq N(t) \leq N'(T_t + \epsilon)$. Consequently:
\[
\frac{N'(T_t)}{T_t} \frac{T_t}{t} \leq \frac{N(t)}{t} \leq \frac{N'(T_t + \epsilon)}{T_t + \epsilon} \frac{T_t + \epsilon}{t}.
\]
Since both sides converge to $d_{\xi, \xi'} \mu \left\{ \eta \in X : \eta_{s|S_R} = \xi_0 \right\}$ almost surely, it leads to the conclusion.
\end{proof}

\begin{figure}[h]
   \centering
   \includegraphics[width=0.7\textwidth]{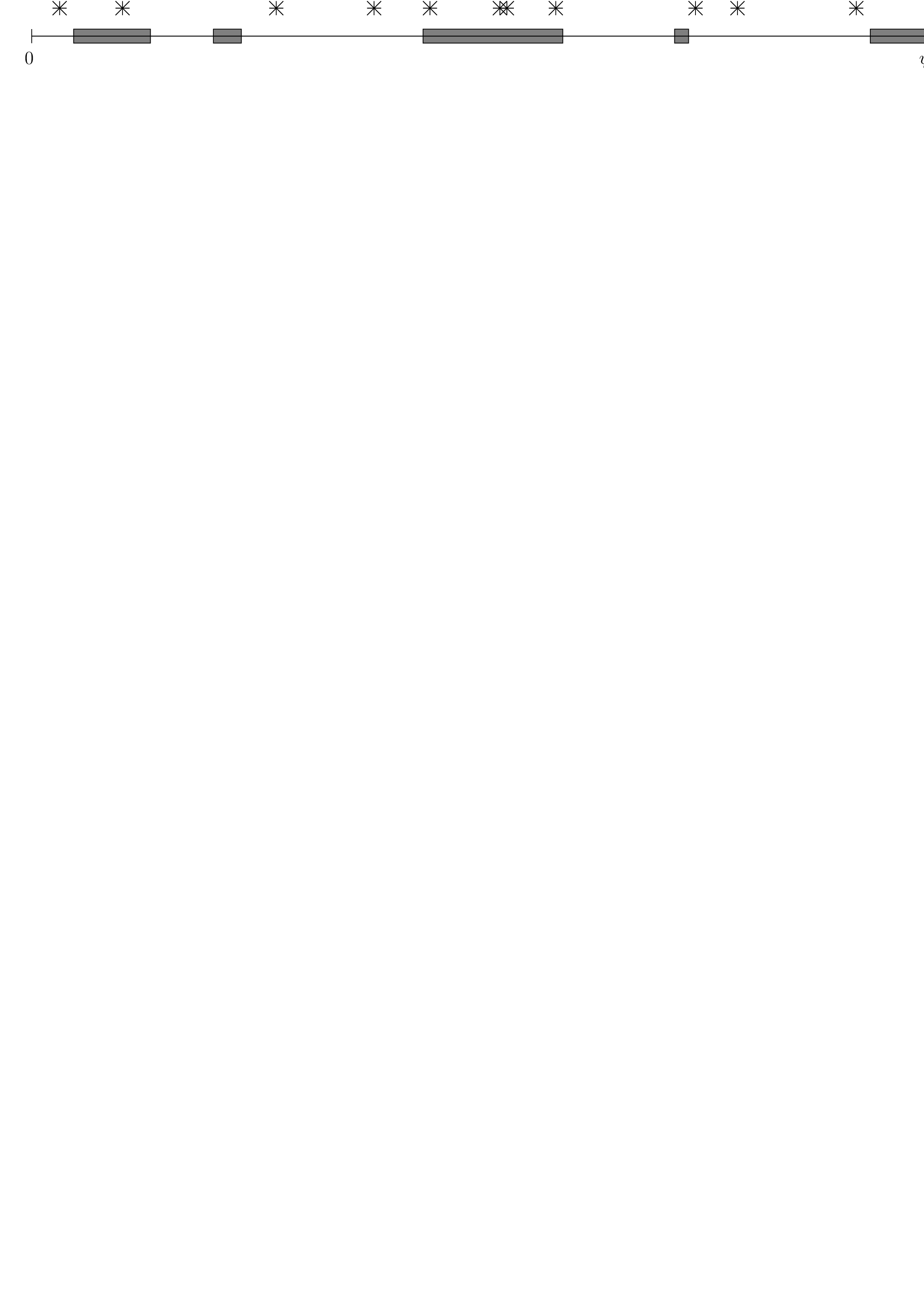} 
   \parbox[c]{12cm}{\caption{\textit{On the time interval $[0,\psi(t)]$ we see the set $I_{\psi(t)}$ in grey. The total length of the grey part is $t$. The stars are points of the process $\Ncal_{\xi,\xi'}$. In this example, $N'(t) = 5$.}}
   \label{fig:LGN}}
\end{figure}

\section{A particular case and the Multi-Species model} \label{ParticularCase}
 
In this section, we are interested in a particular case of TASEP with a complex boundary mechanism: let $\lambda, \epsilon > 0$ such that $\lambda+\epsilon < \frac{1}{2}$. Particles are created at site $1$ with rate $\lambda +\epsilon\eta(2)$, where $\eta$ is the current configuration and the bulk dynamic is the one of the TASEP. This model has a generator given by:
\begin{align}
\begin{split} \label{tm}
\Omega f(\eta) &= \sum_{x \in \ZZp} \eta(x) \( 1-\eta(x+1) \) \left[ f(\eta_{x,x+1}) - f(\eta) \right]\\
		        &\hspace{5mm} \times \( 1-\eta(1) \) \( \lambda+\epsilon\eta(2)\) \left[ f(\eta_1) - f(\eta) \right],
\end{split}
\end{align}
for all cylinder functions $f$ on $X$. As it is explained in the introduction, the choice of the model is rather arbitrary, and the methods that we use are quite robust (at least as long as the system can be dominated by a product Bernoulli measure of intensity lower than $1/2$ --- which is indeed the case here).

\begin{figure}[h]
   \centering
   \includegraphics[width=0.6\textwidth]{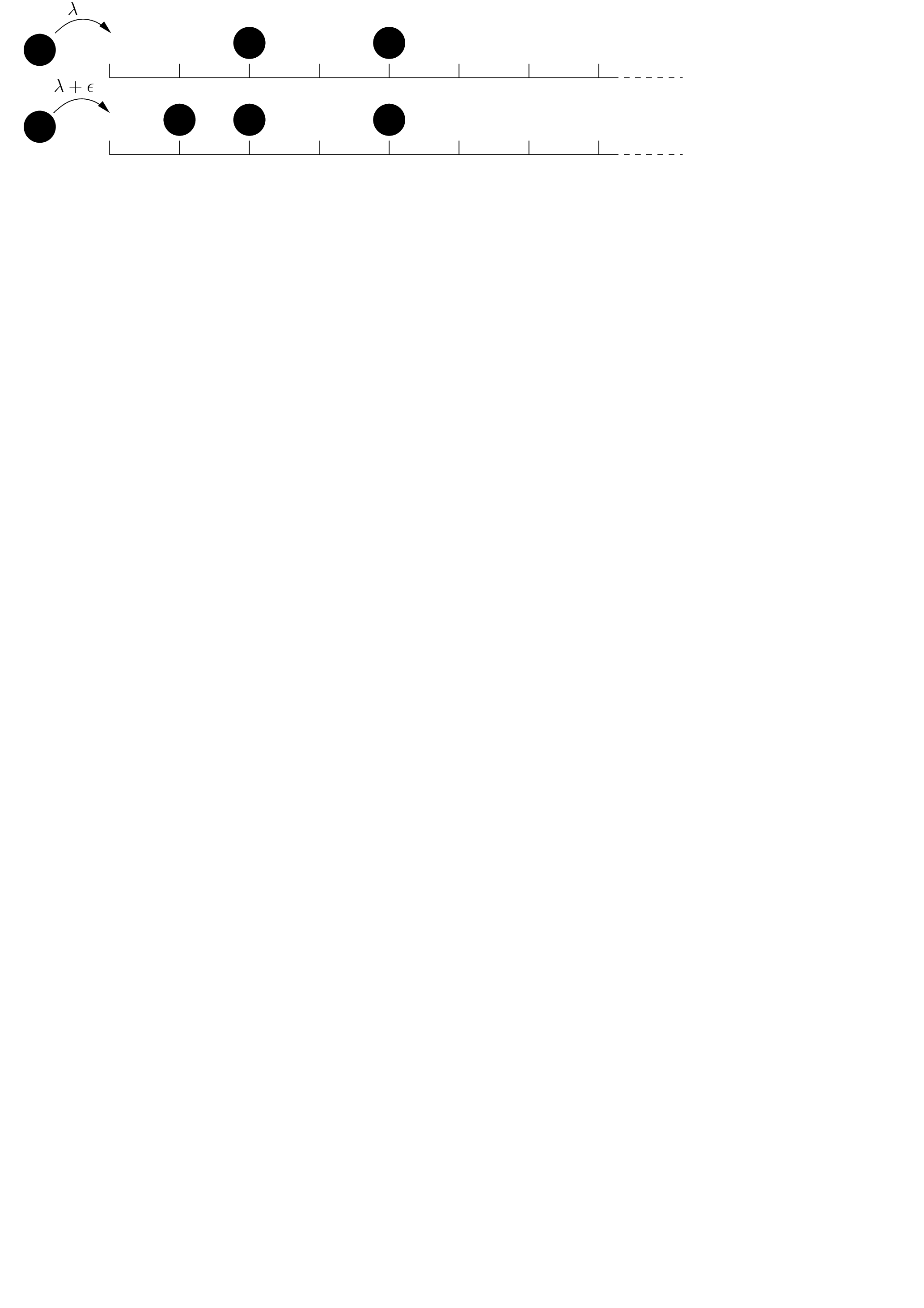} 
   \parbox[c]{12cm}{\caption{\textit{Particles enter with additional rate $\epsilon$ when the site $2$ is occupied.}}
   \label{fig:TasepMod}}
\end{figure}

In this model, the range of the boundary mechanism is $R = 2$. The hypothesis $\epsilon > 0$ implies that the process is monotone, thus we can define the smallest stationary measure $\mu_\infty=\mu_\infty \( \lambda,\epsilon \)$ of the model. Using the Harris representation, we can couple this process with $\eta^\lambda_.$, a TASEP($\lambda$), and $\eta^{\lambda+\epsilon}_.$, a TASEP($\lambda+\epsilon$), such that if $\eta^{\lambda}_0 \leq \eta_0 \leq \eta^{\lambda+\epsilon}_0$ then for all $t \geq 0$, $\eta^{\lambda}_t \leq \eta_t \leq \eta^{\lambda+\epsilon}_t$. This proves that $\nu^\lambda \prec \mu_\infty \prec \nu^{\lambda+\epsilon}$ and then $\nu^\lambda \prec \tilde{\mu}_\infty \prec \nu^{\lambda+\epsilon}$.


\subsection{Some estimates about the flux}

Here we will see another way to see the process with the generator given by \eqref{tm}. For any $i \geq 1$, let
\begin{equation*}
\mathcal{X}_i \eqdef \left\{ \infty,1,\ldots,i \right\}^{\ZZp}.
\end{equation*}
We define
\begin{align}
\begin{split} \label{gms}
\Oi f (\eta) \eqdef &\lambda \indic{\eta(1) \geq 2} \left[ f \(\eta_{1 \to 1}\) - f(\eta) \right]\\
			   &+ \sum_{j=2}^i \epsilon \indic{\eta(1) \geq j+1} \indic{\eta(2) = j-1} \left[ f \(\eta_{j \to 1}\) - f(\eta) \right]\\
			   &+ \epsilon \indic{\eta(1) = \infty} \indic{\eta(2) = i} \left[ f\(\eta_{i \to 1}\) - f(\eta) \right]\\
			   &+ \sum_{x=1}^\infty \indic{\eta(x) \neq \infty} \indic{\eta(x+1) \geq \eta(x)+1} \left[ f\(\eta_{x,x+1}\) - f(\eta) \right],
\end{split}
\end{align}
for all cylinder function $f : \mathcal{X}_i \to \RR$, where
\begin{equation*}
\eta_{j \to 1}(x) \eqdef \begin{cases}
					 j \text{ if } x=1,\\
					 \eta(x) \text{ otherwise,}
		     		  \end{cases}
\end{equation*}
for $j \in \{1,\ldots,i\}$.

\begin{figure}[h]
\centering
\includegraphics[width=0.6\textwidth]{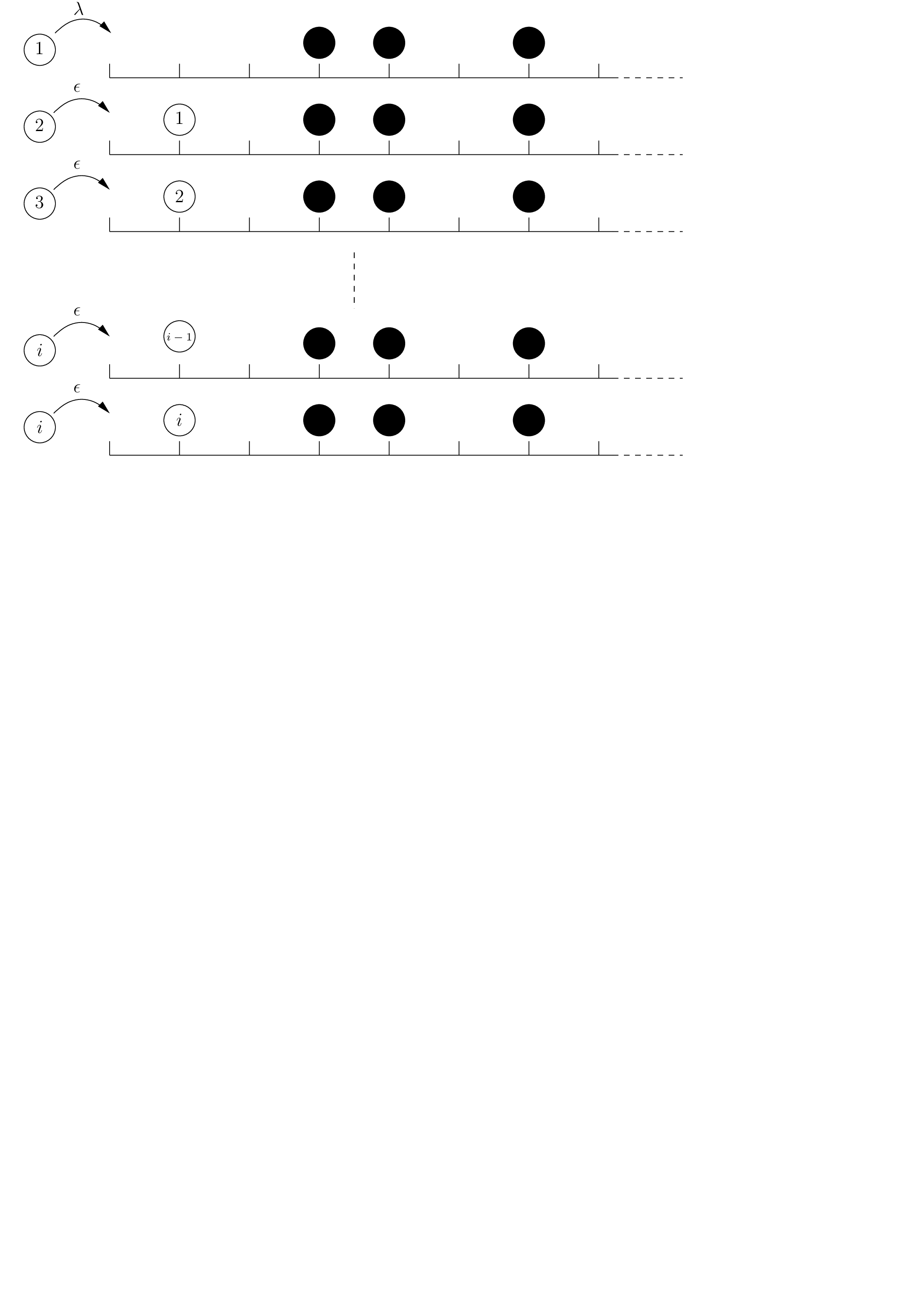}
\parbox[c]{12cm}{\caption{\textit{First class particles enter with rate $\lambda$ whatever is the configuration in $\left\{ 2,3,\ldots \right\}$ and second class particles enter with rate $\epsilon$ if the site $2$ is occupied by a first class particle. Particles in black are indistinguishable particles (their class has no influence on the rate of the source in the current configuration).}}
\label{fig:2ClassesModel}}
\end{figure}

We fix $i \geq 2$ for the sequel. The new description is described in Figure \ref{fig:2ClassesModel} and in the following. We put the particles into a certain number of classes. For a configuration $\eta \in \mathcal{X}_i$ and for a site $x \in \ZZp$, the number $\eta(x)$ designates the class of the particle at site $x$ if it exists, i.e., if $\eta(x) \neq \infty$, and is equal to $\infty$ if the site is empty. We use here another notation for empty sites because it allows us to have a simpler expression for the generator and we can also interpret holes as particles of class infinity. The evolution is the same as before, except that if a particle of the $k$-th class (or of type $k$) attempts to jump on a site occupied by a particle of the $j$-th class (or of type $j$), then it is not allowed to do so if $k \geq j$, and the particles exchange positions if $k < j$. We say that a particle of class $k \in \left\{1, 2, \ldots \right\}$ has priority on all particles of classes greater than $k$. In this way, a particle of type $k$ behaves as a hole for particles of type $j < k$.

Now we will explain how we affect classes to the particles. First class particles enter in the system (in the site $1$) at rate $\lambda$. As they have priority on other particles, they are not affected by them, so the process of first class particles is simply a TASEP($\lambda$) on $\ZZp$. Next, particles of class $2 \leq j \leq i-1$ enter in the system with rate $\epsilon$, if the site $2$ is occupied by a particle of class $j-1$ and with rate $0$ otherwise. Finally, particles of class $i$ enter in the system with rate $\epsilon$ if the site $2$ is occupied by a particle of class $i-1$ or $i$ and with rate $0$ otherwise. For each configuration of the system, at most $2$ types of particles are allowed to enter in the system. We can also remark that if we consider the process consisting with particles of class $1,\ldots,i$, then it has the generator given by \eqref{tm}.

\bigskip

In terms of Harris system, we define $\Ncal$ the set of the following independent Poisson point processes on $\RRp$: let $ \(\Ncal_x, x \geq 1\)$ be Poisson point processes of rate 1; let $\(\Ncal^b_j, j \geq 1\)$ be Poisson point processes of rate $\lambda$ for $\Ncal^b_1$ and of rate $\epsilon$ for the others. The mechanism is then the following: if $t \geq 0$ is a jumping time of $\Ncal_x$ and if at time $t^-$ we have $\eta(x) \neq \infty$ and $\eta(x+1) > \eta(x)$ (i.e., there is a particle at $x$ and it has higher priority than the one at $x+1$ if it exists), then the particle at $x$ jumps to $x+1$ (and the particle at $x+1$ jumps to $x$ if it exists); if $t \geq 0$ is a jumping time of $\Ncal^b_1$ and if at time $t^-$ we have $\eta(1) \geq 2$, then a first class particle appears at site $1$; if $t \geq 0$ is a jumping time of $\Ncal^b_j$ with $2 \leq j \leq i-1$ and if at time $t^-$ we have $\eta(1) \geq j+1$ and $\eta(2) = j-1$, then a $j$-particle appears at site $1$; finally, if $t \geq 0$ is a jumping time of $\Ncal^b_i$ and if at time $t^-$ we have $\eta(1) = \infty$ and $\eta(2) \in \left\{ i-1, i \right\}$, then a $i$-particle appears at site $1$.

We denote by $S^{(i)}(t)$ the semi-group corresponding to the generator $\Oi$ and by $(\Ei[j])_{t \geq 0}$ the process of the $j$-th class particles for $j=1,\ldots,i$. It is easy to see that the distribution of the process $(\Ei[j])_{t \geq 0}$ for $j \in \left\{ 1,\ldots,i-1 \right\}$ does not depend on $i$. The process is attractive, thus we can define $\mu^{(i)}_{\infty}$ as the weak limit of $\delta_\infty \Si$. As in Proposition \ref{pr2}, this measure is extremal, ergodic and the smallest invariant measure of the system. For all $1 \leq j \leq i$, we note $\bar{\eta}^{(j)}_t \eqdef \sum^j_{k=1}\Ei[k]$. Remark that the process $(\bar{\eta}^{(i)}_t)_{t \geq 0}$ is exactly the process that we want to study, i.e., it has the generator given by \eqref{tm}. Furthermore, for all $j \in \left\{ 1,\ldots,i \right\}$ the distribution of the process $(\bar{\eta}^{(j)}_t)_{t \geq 0}$ is the same for all $i$.

\bigskip

In order to compare the processes $(\bar{\eta}^{(i-1)}_t)_{t \geq 0}$ and $(\bar{\eta}^{(i)}_t)_{t \geq 0}$, we need to control the number of particles of a given type in the system at a given time. Let $\Ni[j]$ be the number of $j$-particles entered in the system between times $0$ and $t$, and define
\begin{equation*}
\Ti[j] \eqdef \int_0^t \eta^{(j)}_s(2)ds
\end{equation*}
and
\begin{equation*}
\tilde{T}_t \eqdef \int_0^t \eta_s^{(1)}(2) \( 1-\eta_s^{(1)}(1) \) ds,
\end{equation*}
for all $j \in \left\{ 1,\ldots,i \right\}$.

$\Ti[j]$ is the time spent by $j$-particles in site $2$ during $\left[ 0,t \right]$, and $\tilde{T}_t$ is the length of the subset of $\left[ 0,t \right]$ for which $2$-particles can enter in $1$ with rate $\epsilon$ (excepted if the site $1$ is already occupied by another $2$-particle). The following lemma says that we have an uniform control on the total time spent by a particular particle of type $\geq 2$ at site $2$. Let $T_{k,j}$ be the total time spent in site $2$ by the $k$-th particle of type $j \geq 2$ entered in the system.

\begin{lemm} \label{lem1}
There exists a constant $C_{\lambda} \in \left] 0,+\infty \right[$ such that for all $k \geq 1$ and all $j \geq 2$ we have
\begin{equation*}
\esp{T_{k,j}} \leq C_{\lambda}.
\end{equation*}
\end{lemm}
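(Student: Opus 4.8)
The plan is to follow the tagged particle $P$ --- the $k$-th particle of type $j\ge 2$ to enter --- and to treat it as a single second-class particle in a sub-critical environment. Write $\sigma$ for its entry time; $T_{k,j}$ only accumulates time that $P$ spends at site $2$ after $\sigma$ and while $P$ is still present, so any event removing $P$ from the system (a higher-priority particle being created at site $1$ while $P$ is there) only \emph{decreases} $T_{k,j}$ and may be used freely.

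First I would record the jump rates of $P$ in the Harris construction: from a site $x\ge 2$ it moves to $x+1$ at rate $1$ as soon as $\eta(x+1)\in\{j+1,\dots,i,\infty\}$ --- in particular whenever $x+1$ is empty --- it is pushed to $x-1$ at rate $1$ whenever the particle at $x-1$ has type strictly smaller than $j$, and (for $x=1$) it leaves the system whenever a particle of type $<j$ is created at site $1$. Hence, relative to the process $\bar{\eta}^{(j-1)}$ of particles of type $\le j-1$ --- which does not see $P$ at all, has the bulk dynamics of the TASEP, and satisfies $\bar{\eta}^{(j-1)}_t\prec\nu^{\lambda+\epsilon}\prec\nu^{1/2}$ at every $t$ (being dominated pathwise by $\bar{\eta}^{(i)}_t$, itself dominated by the TASEP($\lambda+\epsilon$) from the empty configuration, whose law increases to $\nu^{\lambda+\epsilon}$) --- the particle $P$ behaves exactly as a second-class particle. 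The remaining particles of type $\ge j$ cannot exchange with $P$ and can only block it, so after a monotone coupling they may be dropped when proving an upper bound on $T_{k,j}$.

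The core of the argument is the positive drift of $P$. Away from the boundary its instantaneous drift is at least
\[
1-\prob{\eta_t(x+1)\le j}-\prob{\eta_t(x-1)\le j}\ge 1-2(\lambda+\epsilon)>0,
\]
since $\bar{\eta}^{(j)}_t\prec\nu^{\lambda+\epsilon}$; together with the fact that from site $1$ the particle can only move right or leave, this makes $P$ transient to the right with uniformly positive speed, so it visits site $2$ only $O(1)$ times in expectation. Each visit has bounded expected length: while $P$ is at site $2$ the clock $\Ncal_2$ rings at rate $1$ and every ring occurring with site $3$ empty sends $P$ to site $3$, and $\prob{\eta_t(3)=\infty}\ge 1-(\lambda+\epsilon)>\tfrac12$ by the same domination on sites $\ge 3$ (or by Proposition \ref{exist}). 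Multiplying the expected number of visits by the expected length of a visit gives $\esp{T_{k,j}}\le C_\lambda$, a bound independent of $k$, $j$ and $i$ (the law of $\bar{\eta}^{(j)}$ not depending on $i$).

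The delicate point is that the number of visits and the length of a visit are correlated with each other and with the environment seen by $P$, so this product must be justified rigorously. I would do so by comparing $P$, through the basic coupling, with a genuine second-class particle added to a stationary TASEP of density $\lambda+\epsilon$ --- for which the expected time spent at a fixed site is finite, as a consequence of the a.s.\ speed $1-2(\lambda+\epsilon)>0$ and standard attractiveness estimates --- while treating the two boundary sites $\{1,2\}$ by hand, where the extra killing of $P$ only helps. Checking that the boundary source of $\bar{\eta}^{(j-1)}$ does not disturb this comparison, and transferring the estimate back to $P$ along the coupling, is the step I expect to require the most work.
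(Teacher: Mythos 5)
Your route is genuinely different from the paper's, and it has a real gap at exactly the point you flag as ``the step I expect to require the most work'' --- which is, unfortunately, the entire content of the lemma. Two things break. First, the drift bound $1-\prob{\eta_t(x+1)\le j}-\prob{\eta_t(x-1)\le j}\ge 1-2(\lambda+\epsilon)$ is obtained by plugging marginal densities at \emph{fixed} sites into the jump rates of the tagged particle; but the sites $x\pm1$ here are the random neighbours of $P$, and the environment seen from $P$ is correlated with its position (this is the classical difficulty with tagged and second-class particles). The domination $\bar\eta^{(j-1)}_t\prec\nu^{\lambda+\epsilon}$ is a statement about unconditional one-dimensional marginals and does not bound the conditional occupation probabilities of the neighbours of $P$, so the ``instantaneous drift'' computation is not rigorous. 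Second, even granting a law of large numbers $X(t)/t\to 1-2(\lambda+\epsilon)>0$ for a second-class particle (known in the translation-invariant stationary setting on $\ZZ$, not in this non-stationary semi-infinite system with a boundary source), an almost-sure positive speed gives no tail bound on the occupation time of a site and hence no bound on its \emph{expectation}; ``standard attractiveness estimates'' do not supply the missing quantitative estimate, and the coupling with a stationary second-class particle, plus the boundary treatment, is precisely what would have to be constructed.

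The paper avoids all of this by not using the bulk at all: the tagged particle does not need to escape to the right, it only needs to be \emph{killed}, and the boundary provides a killing mechanism with probability uniformly bounded below. Let $E_t$ be the event, measurable with respect to $\Ncal_1^b,\Ncal_1,\Ncal_2$ restricted to $[t,t+1]$, that a first-class particle enters at site $1$, then jumps to site $2$ (pushing any lower-class particle there back to site $1$), and then a second first-class particle enters at site $1$; on $E_t$ any particle of class $\ge 2$ sitting at site $2$ at time $t$ has left the system by time $t+1$, and $\prob{E_t}=q(\lambda)>0$ depends only on $\lambda$. Applying the strong Markov property at the time $\tau$ at which $P$ has accumulated exactly time $t$ at site $2$ gives $\prob{T_{k,j}>t+1}\le\bigl(1-q(\lambda)\bigr)\prob{T_{k,j}>t}$, hence a geometric tail and $\esp{T_{k,j}}\le C_\lambda$ uniformly in $k$, $j$ and $i$, with a constant depending only on $\lambda$ (which matters later, since Theorem \ref{DensiteCouches} needs $\epsilon<\frac{1}{2C_\lambda}$ with $C_\lambda$ independent of $\epsilon$). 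If you want to salvage your approach, you would at minimum need a quantitative (e.g.\ exponential) bound on $\prob{X(t)\le x}$ for the second-class particle uniformly over the relevant environments; the renewal-by-killing argument above is both more elementary and immune to the correlation issues.
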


\begin{proof}
Let $E_t$ be the event that, between times $t$ and $t+1$, a first class particle enters (or tries to enter) in the system, then it jumps, if it is possible, to the site $2$ and finally another first class particle enters in the system. We also assume that in $E_t$ there are no other jumping times for $\Ncal_1$, $\Ncal_2$ and $\Ncal^b_1$ between $0$ and $t$. In particular, if $E_t$ occurs and if there was a particle of type $\geq 2$ in site $2$ at time $t$, then it has disappeared at time $t+1$. We have that $\prob{E_t} = q(\lambda)$ does not depend on $t$ and $q(\lambda) > 0$.

On the event $\left\{ T_{k,j} > t \right\}$, there exists a time $\tau$ such that the $k$-th particle of type $j$ is at the site $2$ and it has spent exactly a time $t$ in this site between $0$ and $\tau$. We have $E_{\tau} \subset \left\{ T_{k,j} \leq t+1 \right\}$. Hence
\begin{equation} \label{eq4}
\prob{E_{\tau} | T_{k,j} > t} \leq \prob{T_{k,j} \leq t+1 | T_{k,j} > t}.
\end{equation}
But $\tau$ is a stopping time for the Markov process $( \Ei[l], l = 1,\ldots, j )_{t \geq 0}$ and the event $E_{\tau}$ depends only on the poisson processes of the Harris system for times between $\tau$ and $\tau + 1$, so, conditionally at $\left\{ \tau < \infty \right\}$, $E_{\tau}$ has the same law as $E_0$ by the strong Markov property. Hence the left hand side of \eqref{eq4} is equal to $q(\lambda)$. Finally, we have
\begin{equation*}
\prob{T_{k,j} > t+1} \leq \( 1-q(\lambda) \)\prob{T_{k,j} > t}.
\end{equation*}
The last inequality implies that there exist some deterministic positive constants $a_1 ,a_2$, depending only on $\lambda$, such that almost surely and for all $t \geq 0$ we have
\begin{equation*}
\prob{T_{k,j} > t} \leq a_1e^{-a_2t}.
\end{equation*}
The result follows with $C_{\lambda} \eqdef \int_0^{\infty} a_1e^{-a_2t}dt$.
\end{proof}

Finally, the following theorem gives the estimates that we need:

\begin{theo} \label{DensiteCouches}
For all $1 \leq j \leq i$, $\frac{\Ti[j]}{t}$ converges almost surely to a deterministic value if the process starts under $\Mi[k]$ for all $k \geq i$. Furthermore, for all $\epsilon < \frac{1}{2C_{\lambda}}$, where $C_\lambda$ is as in Lemma \ref{lem1}, we have
\begin{equation*}
\limsup_{t \to \infty}\frac{\Ni[j]}{t} \leq c_{j-1}\epsilon^{j-1}, \quad \lim_{t \to \infty}\frac{\Ti[j]}{t} \leq c_j\epsilon^{j-1},
\end{equation*}
for $1 \leq j \leq i-1$, and
\begin{equation*}
\limsup_{t \to \infty}\frac{\Ni}{t} \leq 2c_{i-1}\epsilon^{i-1}, \quad \lim_{t \to \infty}\frac{\Ti}{t} \leq 2c_i\epsilon^{i-1},
\end{equation*}
where $(c_j)_{j=1,\ldots,i}$ are constants (depending only on $\lambda$) such that $c_0 \eqdef \lambda \( 1-\lambda \)$ and $c_{j} \eqdef C_{\lambda}^{j-1}c_0$.
\end{theo}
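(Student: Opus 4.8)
The plan is to run an induction on the class index $j$. The first-class particles form an autonomous TASEP($\lambda$), which settles $j=1$, and for the inductive step I combine Lemma \ref{lem1} with the time-change mechanism used to prove Proposition \ref{LLG}. Throughout write $\mu$ for $\Mi[k]$ (ergodic, as noted after \eqref{gms}), $d_j\eqdef\mu\{\eta(2)=j\}$, and $r_j\eqdef\esp{\Ni[j]}$ at time $t=1$, the stationary entry rate of $j$-particles. For every $j\le i$ one has $\frac1t\Ti[j]=\frac1t\int_0^t\indic{\eta_s(2)=j}\,ds\to d_j$ and $\frac1t\Ni[j]\to r_j$ almost surely by Birkhoff's ergodic theorem; this already proves the first assertion, so the remaining work is to bound the deterministic numbers $d_j$ and $r_j$. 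For the base case $j=1$: since $(\eta^{(1)}_t)$ is an autonomous TASEP($\lambda$) and $\lambda<\tfrac12$, its marginal under $\mu$ is the weak limit from the empty configuration, namely $\nu^\lambda$ (Theorem \ref{th1}), so $\tilde T_t/t\to\langle\eta(2)(1-\eta(1))\rangle_{\nu^\lambda}=\lambda(1-\lambda)=c_1$ and $\Ni[1]/t\to\lambda(1-\lambda)=c_0$. Note that it is $\tilde T_t$, the length of the time set on which $2$-particles may enter, and not $\Ti[1]$ itself (whose limit is $\lambda$), that feeds the recursion when $j-1=1$.

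\emph{Inductive step, $2\le j\le i-1$.} A $j$-particle appears precisely at a point of $\Ncal^b_j$ (rate $\epsilon$) lying in $I^{(j)}_t\eqdef\{s\le t:\eta_s(1)\ge j+1,\ \eta_s(2)=j-1\}$, and each such appearance sets $\eta(1)=j$ and hence leaves the entry configuration; so the time-change argument of Proposition \ref{LLG}, with $\xi_0$ ranging over the finitely many entry configurations, gives $\Ni[j]/t\to\epsilon\,\mu\{\eta(1)\ge j+1,\eta(2)=j-1\}$ almost surely. For $j\ge3$ this limit is $\le\epsilon\,d_{j-1}$; for $j=2$ the event lies in $\{\eta^{(1)}(2)(1-\eta^{(1)}(1))=1\}$, so the limit is $\le\epsilon\lim\tilde T_t/t=\epsilon c_1$. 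The induction hypothesis then yields $\limsup\Ni[j]/t\le c_{j-1}\epsilon^{j-1}$. For the occupation time, bound $\Ti[j]\le\sum_{k=1}^{\Ni[j]}T_{k,j}$ with $T_{k,j}$ as in Lemma \ref{lem1}; letting $\tau_k$ be the entry time of the $k$-th $j$-particle, $\{\tau_k\le t\}$ is $\mathcal{F}_{\tau_k}$-measurable and the proof of Lemma \ref{lem1} in fact delivers the conditional bound $\esp{T_{k,j}\mid\mathcal{F}_{\tau_k}}\le C_\lambda$ (its strong-Markov step uses only freshness of the Harris clocks after the relevant stopping time), whence $\esp{\Ti[j]}\le C_\lambda\sum_k\prob{\tau_k\le t}=C_\lambda\esp{\Ni[j]}$. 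Dividing by $t$ and letting $t\to\infty$, and using that under $\mu$ one has $\esp{\Ti[j]}/t\to d_j$ and $\esp{\Ni[j]}/t\to r_j$, we obtain $d_j\le C_\lambda r_j\le C_\lambda\epsilon\,d_{j-1}\le C_\lambda c_{j-1}\epsilon^{j-1}=c_j\epsilon^{j-1}$, which closes the induction.

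\emph{Top level, $j=i$.} The only change is that $i$-particles appear at the $\Ncal^b_i$-points with $\eta(1)=\infty$ and $\eta(2)\in\{i-1,i\}$, so the same reasoning gives $\limsup\Ni/t\le\epsilon(d_{i-1}+d_i)$ and $d_i\le C_\lambda\epsilon(d_{i-1}+d_i)$ (with $d_{i-1}$ read as $c_1$ when $i=2$). Since $\epsilon<\tfrac1{2C_\lambda}$ we may rearrange the last inequality: $d_i\le\frac{C_\lambda\epsilon}{1-C_\lambda\epsilon}\,d_{i-1}\le2C_\lambda\epsilon\,d_{i-1}\le2c_i\epsilon^{i-1}$, and then $\limsup\Ni/t\le\epsilon\,d_{i-1}(1+2C_\lambda\epsilon)\le2c_{i-1}\epsilon^{i-1}$, which are the claimed bounds.

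\emph{Main obstacle.} The delicate part is the estimate $\esp{\Ti[j]}\le C_\lambda\esp{\Ni[j]}$ and its use: one must (i) verify that the recursive tail bound in the proof of Lemma \ref{lem1} holds conditionally on $\mathcal{F}_{\tau_k}$, giving $\esp{T_{k,j}\mid\mathcal{F}_{\tau_k}}\le C_\lambda$; (ii) justify interchanging the infinite sum with the expectation at the random index $\Ni[j]$; and (iii) identify $\lim_t\esp{\Ni[j]}/t$ (equivalently $\lim_t\esp{\Ti[j]}/t$) with the almost-sure ergodic limit $r_j$ under $\mu$. A conceptually cleaner but not genuinely shorter route would replace (i)--(iii) by a direct strong law $\frac1n\sum_{k=1}^nT_{k,j}\to$ (Palm expectation) $\le C_\lambda$, obtained from an ergodic theorem for the Palm measure of the point process of $j$-particle arrivals.
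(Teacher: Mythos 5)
Your proof is correct and follows essentially the same strategy as the paper: induction on the class index, Proposition \ref{LLG} for the entry rates, Lemma \ref{lem1} for the occupation time at site $2$, and the self-consistent inequality at the top class $i$ resolved using $\epsilon < \frac{1}{2C_\lambda}$. The one technical divergence is in how the random sum $\sum_{k \leq \Ni[j]} T_{k,j}$ is controlled: you use a Wald-type identity resting on the conditional bound $\esp{T_{k,j} \mid \mathcal{F}_{\tau_k}} \leq C_\lambda$, whereas the paper truncates the number of summands at a deterministic $\beta t$ (with $\beta$ slightly above the entry rate) on the eventually-sure event $\left\{ N^{(j)}_s \leq \beta s \right\}$ and then passes to the limit by dominated convergence, using that the a.s.\ limit of $\Ti[j]/t$ is deterministic by ergodicity. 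Both routes work, and the conditional bound you need does follow from the proof of Lemma \ref{lem1}, since that argument only involves Harris clocks posterior to the relevant stopping time. Two small points you should patch: (i) under $\Mi[k]$ there may be $j$-particles already present at sites $1$ and $2$ at time $0$, so $\Ti[j]$ is not literally bounded by $\sum_{k \leq \Ni[j]} T_{k,j}$; the paper splits off this initial contribution, which is harmless because the dynamics is totally asymmetric, so no particle starting at a site $\geq 3$ ever visits site $2$ and the correction is $O(1)$; (ii) the a.s.\ convergence of $\Ni[j]/t$ is not a direct application of Birkhoff but of Proposition \ref{LLG} (the time-changed Poisson argument), as you in fact use in the inductive step. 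Your explicit remark that it is $\tilde{T}_t$, and not $\Ti[1]$ (whose limit is $\lambda$, not $\lambda(1-\lambda)$), that feeds the recursion at level $j=2$ is a correct and welcome clarification of a point the paper glosses over.
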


\begin{proof}
We have seen that every $\Mi[k]$ is stationary and ergodic, so by the ergodic theorem, we have almost surely
\begin{equation} \label{eq2}
\frac{\Ti[j]}{t} \underset{t \to \infty}{\longrightarrow} \Mi[k] \left\{ \eta \in \mathcal{X}_k : \eta(2) = j \right\}
\end{equation}
and
\begin{equation} \label{eq3}
\frac{\tilde{T}_t}{t} \underset{t \to \infty}{\longrightarrow} \Mi[k] \left\{ \eta \in \mathcal{X}_k : \eta(1) \geq 2, \eta(2) = 1 \right\}.
\end{equation}
Since the distribution of the first class particles is $\nu^\lambda$ under every $\Mi[k]$, the right hand side of \eqref{eq2} is $\lambda$ if $j = 1$ and the right hand side of \eqref{eq3} is $\lambda \( 1-\lambda \)$. Using Proposition \ref{LLG}, $\Ni[1]/t$ converges to $\lambda(1-\lambda)$ almost surely.

\bigskip

\noindent
Let
\[
M^{(2)}_t \eqdef \sharp \left\{ s \in \Ncal^b_2 \cap \left[ 0, t \right] : \eta^{(1)}_s(2) \( 1-\eta^{(1)}_s(1) \) = 1 \right\}.
\]
Then almost surely $\Ni[2] \leq M^{(2)}_t$ and applying Proposition \ref{LLG}:
\begin{equation} \label{eq5}
\limsup_{t \to \infty} \frac{\Ni[2]}{t} \leq \epsilon\lambda \( 1-\lambda \) = \lim_{t \to \infty} \frac{M_t^{(2)}}{t}.
\end{equation}

Now, we need to find an upper bound for $\lim_{t \to \infty} \frac{\Ti[2]}{t}$. First, we can remark that $\Ti[2]$ can be decomposed into two parts: the time spent by initial second class particles $T^{(2)}_{t,1}$ plus the time spent by the new second class particles $T^{(2)}_{t,2}$ in site $2$. But, since $T^{(2)}_{t,1}$ is bounded by a random variable almost surely finite, it is sufficient to study $\lim_{t \to \infty} \frac{T^{(2)}_{t,2}}{t}$. As we have seen previously, $\Mi[k] \prec \nu^{\lambda + \epsilon}$. The idea is that since we know the number of second class particles created up to time $t$, it is sufficient to bound the time spent in site $2$ by one of them in the environment $\nu^{\lambda + \epsilon}$ where it is slower. But there are some difficulties. For example, at the moment where a second class particle is created, the environment in $\left\{ 2,3,\ldots \right\}$ is not dominated anymore by a Bernoulli product with density $\lambda + \epsilon$ because we know that a first class particle has to be in site $2$. To avoid this problem, we will use the following fact: if a particle of a class different than $1$ is at the site $2$ at time $t$ then it has a positive probability (depending only on $\lambda$) to be out of the system at time $t+1$. This implies Lemma \ref{lem1} which says:
\begin{equation} \label{eq6}
\esp{T_l} \leq C_{\lambda},
\end{equation}
where $T_l$ is the total time spent by the $l$-th second class particle at site $2$ and $C_{\lambda}$ is a constant. Take any $\beta > \epsilon \lambda \( 1-\lambda \)$ and $\tau \eqdef \infset{\forall s \geq t, N^{(2)}_s \leq \beta s}$. We have that $\tau$ is almost surely finite by \eqref{eq5} and
\begin{equation} \label{TempsNombre1}
\frac{T^{(2)}_{t,2}}{t} \indic{\{\tau \leq t\}} \leq \frac{1}{t} \sum_{i=1}^{\Ni[2]} T_i \indic{\{\tau \leq t\}} \leq \frac{1}{t}\sum_{i=1}^{\beta t} T_i \indic{\{\tau \leq t\}}.
\end{equation}
Taking expectation in both sides, it leads to
\begin{equation} \label{TempsNombre2}
\esp{\frac{T^{(2)}_{t,2}}{t} \indic{\{\tau \leq t\}}} \leq \frac{1}{t} \sum^{\beta t}_{i=1} \esp{T_i \indic{\{\tau \leq t\}}} \underset{\eqref{eq6}}{\leq} \beta C_{\lambda}.
\end{equation}
Hence, by dominated convergence we have almost surely
\begin{equation} \label{TempsNombre3}
\lim_{t \to \infty} \frac{\Ti[2]}{t} = \lim_{t \to \infty} \frac{T^{(2)}_{t,2}}{t} = \lim_{t \to \infty} \esp{\frac{T^{(2)}_{t,2}}{t} \indic{\{\tau \leq t\}}} \leq \beta C_{\lambda}.
\end{equation}
The above inequality is true for all $\beta >  \epsilon \lambda \( 1-\lambda \)$, thus we also have
\begin{equation*}
\lim_{t \to \infty} \frac{\Ti[2]}{t} \leq \epsilon \lambda \( 1-\lambda \) C_{\lambda}.
\end{equation*}
Let now $c_2 \eqdef C_{\lambda}c_1$ and by induction, using exactly the same arguments, we have for all $1 \leq j \leq i-1$:
\begin{equation*}
\limsup_{t \to \infty} \frac{\Ni[j]}{t} \leq c_{j-1}\epsilon^{j-1},
\end{equation*}
and
\begin{equation*}
\lim_{t \to \infty} \frac{\Ti[j]}{t} \leq c_j\epsilon^{j-1},
\end{equation*}
where $c_{j} \eqdef C_{\lambda}^{j-1} \lambda \( 1-\lambda \)$.

Finally, let $\alpha \eqdef \limsup_{t \to \infty} \frac{N_{t}^{(i)}}{t}$. Doing the same computation as in \eqref{TempsNombre1}, \eqref{TempsNombre2} and \eqref{TempsNombre3}, we get:
\begin{equation*}
\lim_{t \to \infty} \frac{T_{t}^{(i)}}{t} \leq \alpha C_{\lambda}.
\end{equation*}
Consequently,
\begin{equation*}
\lim_{t \to \infty} \frac{T_{t}^{(i-1)}+T_{t}^{(i)}}{t} \leq c_{i-1}\epsilon^{i-1}+\alpha C_{\lambda},
\end{equation*}
which implies as in \eqref{eq5}:
\begin{equation*}
\limsup_{t \to \infty} \frac{N_{t}^{(i)}}{t} = \alpha \leq (c_{i-1}\epsilon^{i-1}+\alpha C_{\lambda})\epsilon.
\end{equation*}
Since $\epsilon < \frac{1}{2C_{\lambda}}$, we have $\alpha \leq 2c_{i-1}\epsilon^{i}$ and
\begin{equation*}
\lim_{t \to \infty} \frac{T_{t}^{(i)}}{t} \leq 2c_{i}\epsilon^{i}.
\end{equation*}
\end{proof}

Now, let $\bar{N}_t^{(i-1)}$ and $\bar{N}_t^{(i)}$ be the number of particles entered in the system between $0$ and $t$ for the processes $(\bar{\eta}^{(i-1)}_t)_{t \geq 0}$ and $(\bar{\eta}^{(i)}_t)_{t \geq 0}$. We deduce from the above theorem, by summing estimates, that $\limsup_{t \to \infty} \frac{\bar{N}_t^{(i)}-\bar{N}_t^{(i-1)}}{t} = O \( \epsilon^{i-1} \)$.


\subsection{The asymptotic flux at the first order}

In this section, we consider the particle system with the generator given by \eqref{gms} for $i = 3$. In the previous section we have seen that if we want to compute the limit, at the order $\epsilon$, of the total number of new particles at time $t$ divided by $t$, then it is enough to compute this limit only for the first and second class particles. In other words, if $N_t^{T,(j)}$ denotes the number of $j$-particles entered between $0$ and $t$ which are still alive at time $t$, then:
\begin{align*}
\limsup_{t \to \infty} \frac{N_t^{T,(1)}+N_t^{T,(2)}+N_t^{T,(3)}}{t} &= \limsup_{t \to \infty} \frac{N_t^{T,(1)}+N_t^{T,(2)}}{t} + o(\epsilon),\\
												&= \lambda(1-\lambda) + \limsup_{t \to \infty} \frac{N_t^{T,(2)}}{t} + o(\epsilon).
\end{align*}
We will use the notation $N_t$ rather than $N_t^{T,(2)}$ for the number of second class particles because there will be no possible confusion. The aim of this section is to prove a law of large numbers for $N_t$ and to compute the limit at the order $\epsilon$. Let $c > 0$ such that $\lambda + c < \frac{1}{2}$ and $\epsilon \in \left[ 0, c \right]$. Let us introduce some notations. Let $X_i(t)$ be the position at time $t$ of the $i$-th second class particle entered in the system, with the convention $X_i(t) \eqdef 0$ if the corresponding particle either is not yet born or is died at time $t$. We define
\begin{align*}
&T_i^e \eqdef \infset{ X_i(t) = 1 }, &&T_i^s \eqdef \supset{ X_i(t) \geq 1 },\\
&S_i(t) \eqdef \indic{ X_i(t) \geq 1 } \hspace{1.5cm} \text{ and } &&S_i \eqdef \indic{ T_i^s = \infty }.
\end{align*}

\begin{figure}[ht]
\centering
\includegraphics[width=0.6\textwidth]{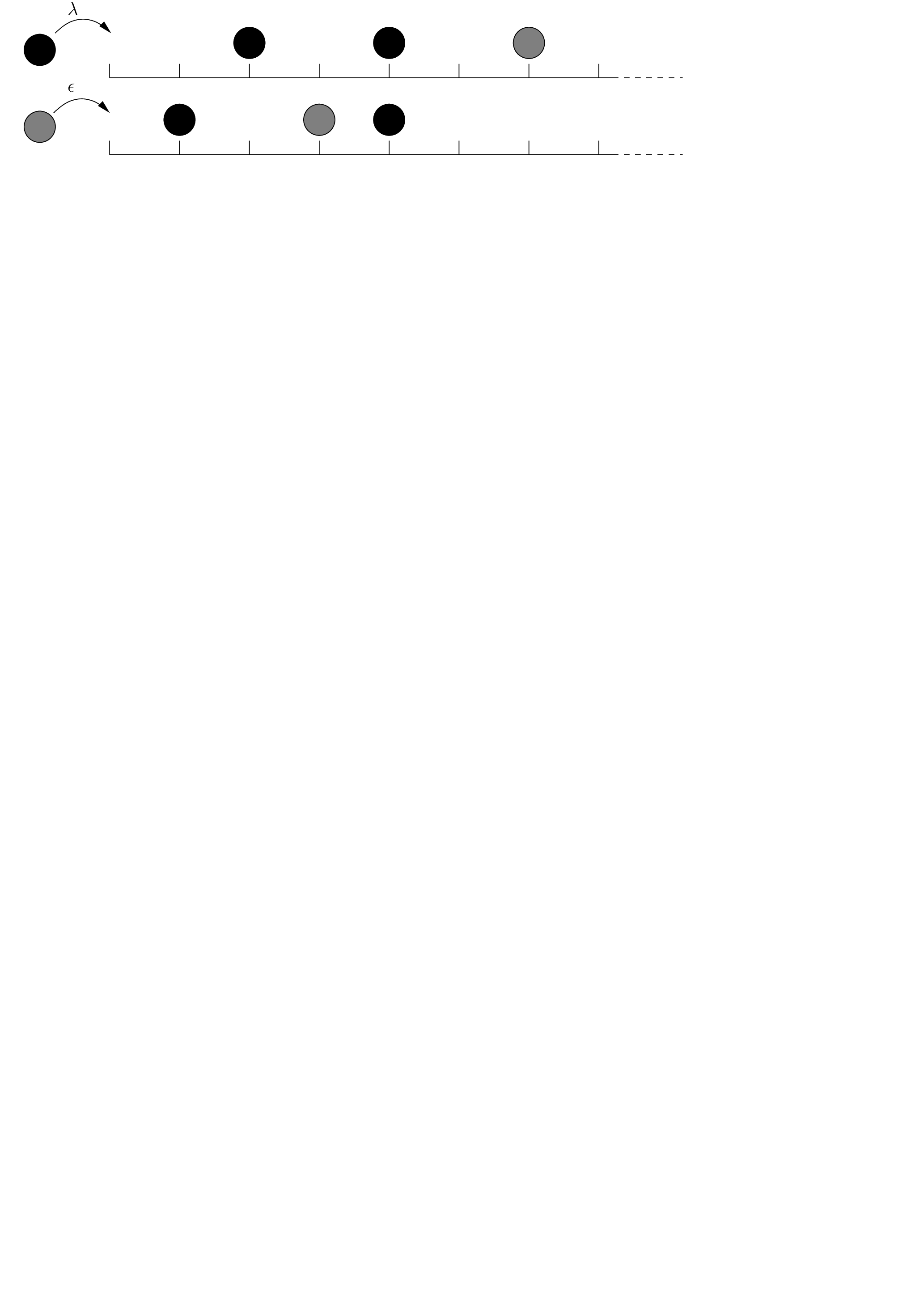}
\parbox[c]{12cm}{\caption{\textit{First class particles, in black, enter with rate $\lambda$ whatever is the configuration in $\left\{ 2,3,\ldots \right\}$ and second class particles, in grey, enter with rate $\epsilon$ if the site $2$ is occupied by a first class particle.}}
\label{fig:Tauxentree}}
\end{figure}

At time $0$ we start with the invariant measure $\Mi[2]$ and we have $\nu^\lambda \prec \Mi[2] \prec \nu^{\lambda + c}$. Since the dynamic is monotone, we can couple our process $\eta_.$ with a TASEP($\lambda$) $\eta_.^{inf}$ and a TASEP($\lambda + c$) $\eta_.^{sup}$ such that for all $t \geq 0$, we have $\eta_t^{inf} \leq \eta_t \leq \eta_t^{sup}$ almost surely. We define a new particle system with state space $\{0,1,(2,i)_{i \geq 1}\}$ and generator:
\begin{align*}
\begin{split}
\bar{\Omega}_\nu f(\eta) &\eqdef \nu\indic{\eta(1) \neq 1} \( f(\eta_{1\to1}) - f(\eta) \) + \epsilon\indic{\eta(1) \neq 1,\eta(2) = 1} \( f(\eta_{2\to1}) - f(\eta) \),\\
			        & \hspace{5mm} + \sum_{x=1}^\infty \indic{\eta(x) \neq 0,\eta(x+1) \neq 1} \( f(\eta_{x,x+1}) - f(\eta) \),
\end{split}
\end{align*}
for all cylinder function $f$, where
\[
\eta_{1\to1}(z) \eqdef \left\{ \begin{array}{cl}
					1 & \text{if } z=1,\\
					\eta(z) & \text{otherwise,}
				       \end{array} \right.
\]
and
\[
\eta_{2\to1}(z) \eqdef \left\{ \begin{array}{cl}
					(2,1) & \text{if } z=1 \text{ and } \eta(1) = 0,\\
					(2,i+1) & \text{if } z=1 \text{ and } \eta(1) = (2,i),\\
					\eta(z) & \text{otherwise.}
				       \end{array} \right.
\]
This particle system has the following description: particles of type $1$ are first class particles and they have the generator $\Omega_\nu$; particles of type $2$ are second class particles, they enter with rate $\epsilon$ if there is a first class particle at site $2$ and they do not interact with any other second class particle. To link our process to the above process, we proceed as follows: at each time for which a second class particle enters in the real system, we add a \emph{supplementary} particle to each of the processes $\eta_.^{inf}$ and $\eta_.^{sup}$ if it is possible and we denote by $X_i^{inf}(t)$ and $X_i^{sup}(t)$ the corresponding trajectories of the particles associated to $X_i$. What we mean by \emph{supplementary} particle is that these particles does not belong to the processes $\eta_.^{inf}$ and $\eta_.^{sup}$, i.e., they move as second class particles and they do not interact with each other. In particular, a given site in one of these two processes can either be empty, or contains one first class particle, or contains one or more supplementary particles. Then the process $\eta_.^{inf}$ plus supplementary particles has the generator $\bar{\Omega}_\lambda$ and the process $\eta_.^{sup}$ plus supplementary particles has the generator $\bar{\Omega}_{\lambda+\epsilon}$. We have almost surely $X_i^{sup}(t) \leq X_i(t) \leq X_i^{inf}(t)$ for all $t \geq 0$. We define analogously the quantities $N_t^{inf}$, $N_t^{sup}$, $T_i^{s,inf}$, $T_i^{s,sup}$, etc.

Consider the following initial configuration:  at time $0$ we put on $\ZZp \backslash \left\{ 1, 2 \right\}$ first class particles according to $\nu^\lambda$ (the product Bernoulli measure with density $\lambda$), and we put one first class particle in site $2$ and one second class particle in site $1$. This is exactly the distribution of the configuration $\eta_{T_i^e}^{inf}$ for all $i \geq 1$. Then first class particles enter in site $1$ with rate $\lambda$ and they have priority on the second class particle.
Two cases can occur: either the second class particle survives, or it dies. Let $p\( \lambda \)$ be the probability that the second class particle survives. $p$ is a non-increasing function, $p(0) = 1$, $p(\frac{1}{2}) = 0$ and $p\( \lambda \) > 0$ for all $\lambda < \frac{1}{2}$. Indeed, for the last point, it can be shown that if the second class particle survives, then it has a positive speed $1 - 2\lambda$ (see e.g. \cite{Saada1987}). We have by construction $\prob{S_i^{inf} = 1} = p(\lambda)$ and $\prob{S_i^{sup} = 1} = p(\lambda + c)$ for all $i \geq 1$. Consequently we have $p(\lambda) \leq \prob{S_i = 1} \leq p(\lambda + c)$.

In this section we prove the following law of large numbers:
\begin{theo} \label{DensiteOrdreEpsilon}
Almost surely, $\displaystyle{\lim_{\epsilon \downarrow 0} \frac{1}{\epsilon} \lim_{t \rightarrow \infty} \frac{N_t}{t} = \lambda \( 1-\lambda \) p(\lambda)}$.
\end{theo}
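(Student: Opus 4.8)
The plan is to start the system from the ergodic stationary measure $\Mi[2]$ and first establish a law of large numbers for $N_t$ at fixed $\epsilon$, writing the limit as (asymptotic rate at which second class particles enter) times (asymptotic fraction of them that survive forever), and only then to let $\epsilon \downarrow 0$. Since a second class particle with $S_i = 1$ has $T_i^s = \infty$ and hence stays in the system at every time $t \ge T_i^e$, I would split
\[
N_t = A_t + B_t, \qquad A_t \eqdef \#\{\, i : T_i^e \le t,\ S_i = 1 \,\}, \qquad B_t \eqdef \#\{\, i : T_i^e \le t < T_i^s,\ S_i = 0 \,\},
\]
so that $B_t$ counts the doomed particles still present at time $t$. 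The entry process $\mathcal N_t$ of second class particles grows linearly: by the ergodic theorem (as for Proposition \ref{LLG} applied to the boundary transition that creates a $2$-particle), $\mathcal N_t / t \to \epsilon\,\Mi[2]\{\,\eta(1) \ge 3,\ \eta(2) = 1\,\}$ almost surely. Because the first class particles form an autonomous stationary TASEP($\lambda$), their marginal under $\Mi[2]$ is the product measure $\nu^\lambda$, and because the particles of class $\ge 2$ together carry mass $O(\epsilon)$ (uniformly for $\epsilon \le c$, by $\Mi[2] \prec \nu^{\lambda+c}$), this limit is $\epsilon\,(\lambda(1-\lambda) + O(\epsilon))$.

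Next I would prove that the doomed particles are negligible, i.e.\ that $B_t / t \to 0$ almost surely. This follows once one knows that the lifetime $T_i^s - T_i^e$ of a doomed second class particle has an exponential tail, with constants depending only on $\lambda$ (resp.\ $\lambda + c$): the mechanism is the same as in Lemma \ref{lem1} --- whenever such a particle occupies a site, there is a probability bounded below that within one unit of time enough first class particles enter the system to push it back to site $1$ and overwrite it. Combined with the linear growth of $\mathcal N_t$, this yields $\sup_{t \ge 0} \esp{B_t} < \infty$, hence $B_t / t \to 0$ almost surely. Consequently $\lim_{t \to \infty} N_t / t$ exists if and only if $\lim_{t \to \infty} A_t / t$ does, and the two limits then coincide.

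The core of the argument is the limit of $A_t / \mathcal N_t$. Started from $\Mi[2]$, the births $\{T_i^e\}$ together with their survival marks $\indic{S_i = 1}$ form a stationary marked point process, which is ergodic because $\Mi[2]$ is, so the ergodic theorem for marked point processes gives $A_t / \mathcal N_t \to \pi$ almost surely, where $\pi$ is the Palm probability that a second class particle survives. By stationarity $\pi = \prob{S_i = 1}$ for every $i$, and the basic coupling with a TASEP($\lambda$) $\eta^{inf}$ and a TASEP($\lambda + c$) $\eta^{sup}$ (under which $X_i^{sup}(t) \le X_i(t) \le X_i^{inf}(t)$, hence $S_i^{sup} \le S_i \le S_i^{inf}$) traps $\pi$ between $\prob{S_i^{sup} = 1} = p(\lambda + c)$ and $\prob{S_i^{inf} = 1} = p(\lambda)$. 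Putting the three steps together, for every fixed $\epsilon \in (0, c]$ the limit $j_2(\epsilon) \eqdef \lim_{t \to \infty} N_t / t$ exists almost surely and equals $\epsilon\,(\lambda(1-\lambda) + O(\epsilon))\,\pi$ with $\pi = \pi(\epsilon,c)$ caught between $p(\lambda + c)$ and $p(\lambda)$.

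It then remains to divide by $\epsilon$ and take limits: $\tfrac1\epsilon j_2(\epsilon) = (\lambda(1-\lambda) + O(\epsilon))\,\pi(\epsilon,c)$, so letting $\epsilon \downarrow 0$ first gives, for $\tfrac1\epsilon j_2(\epsilon)$, both $\liminf \ge \lambda(1-\lambda)\,p(\lambda+c)$ and $\limsup \le \lambda(1-\lambda)\,p(\lambda)$; letting then $c \downarrow 0$ and using $p(\lambda + c) \to p(\lambda)$ (monotonicity and continuity of $p$) pins down $\lim_{\epsilon \downarrow 0} \tfrac1\epsilon j_2(\epsilon) = \lambda(1-\lambda)\,p(\lambda)$. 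The step I expect to be hardest is the one for $A_t$: turning the single-particle identity $\prob{S_i = 1} = \pi$ into the almost sure convergence $A_t / \mathcal N_t \to \pi$ genuinely needs the ergodic/Palm machinery, since survival is a tail event and the marks $\indic{S_i = 1}$ are far from independent; and the negligibility of $B_t$, though intuitively obvious, also requires care, as a doomed second class particle could a priori wander far from the boundary before it is reabsorbed.
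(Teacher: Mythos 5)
Your route is genuinely different from the paper's: instead of comparing $N_t$ with the non-interacting count $N_t^{inf}$ and spending most of the work showing that interactions near the boundary are rare (Lemma \ref{EstimationSurvie}, Corollary \ref{EstimationSurvieForte}, the meeting lemma and \eqref{RencontreImpliqueSurvie}), you identify $\lim_t N_t/t$ directly as an entry rate times a Palm survival probability and sandwich that probability between $p(\lambda+c)$ and $p(\lambda)$ via the monotone coupling $S_i^{sup} \leq S_i \leq S_i^{inf}$. Structurally this is attractive, because the coupling already absorbs the interaction between second class particles (the dominating TASEP($\lambda+c$) sits above \emph{all} particles of the true process). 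But two of your steps have concrete problems.

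First, your argument closes only through the right-continuity of $p$ at $\lambda$: from $p(\lambda+c) \leq \pi(\epsilon,c) \leq p(\lambda)$ you get $\liminf_{\epsilon\downarrow 0}\tfrac1\epsilon j_2(\epsilon) \geq \lambda(1-\lambda)\,p(\lambda^+)$, and without $p(\lambda^+)=p(\lambda)$ the sandwich does not close. You invoke ``monotonicity and continuity of $p$'', but monotonicity gives nothing here, and continuity of $p$ is not part of the setup: it is a substantive lemma of the paper, proved by a delicate coupling of the families $(\Ncal^b_\lambda)$ and $(\eta_0^\lambda)$ and a localization of the survival event. Asserting it is the single largest gap in your proposal; any complete write-up must contain that lemma (or an equivalent). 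Second, your mechanism for $B_t/t \to 0$ is wrong as stated. The Lemma \ref{lem1} device works at site $2$ because flushing a particle from there requires only two first class entries in unit time, with probability $q(\lambda)>0$ independent of everything else; but a doomed particle sitting at site $x$ is killed within unit time only on an event of probability $q_x(\lambda)$, which tends to $0$ as $x\to\infty$, so there is no position-uniform lower bound and no exponential tail for the lifetime follows from this argument. The statement $B_t/t\to 0$ is true, but it needs a different proof (conditioning on $\{S_i=0\}$ the excursion is finite and one must control its length; the paper sidesteps the issue by working with $N_t^{inf}$, whose supplementary particles are exchangeable, and the subsequence $t_n$ of birth times of survivors in \eqref{eq11}); note also that $\sup_t\esp{B_t}<\infty$ alone yields convergence in probability, not the almost sure statement you need. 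A smaller caveat: the identity $\prob{S_i=1}=\pi$ and the almost sure convergence $A_t/\mathcal N_t\to\pi$ require the PASTA-type argument identifying the law of the configuration at $T_i^e$ and the cross ergodic theorem for the two-sided stationary extension of $\Mi[2]$; this is standard but heavier machinery than Proposition \ref{LLG}, which only handles marks that are instantaneous functions of the state, and you should say explicitly how ergodicity of $\Mi[2]$ feeds into it.
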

With the discussion at the beginning of the section, Theorem \ref{MainResult} follows.

The idea is the following: when $\epsilon$ is very small, second class particles do not interact before they are very far from the left boundary and if a second class particle is far enough from this boundary, then it survives with high probability. In other words, the effect on $N_t$ of interaction goes to $0$ with $\epsilon$. The first part of the proof is to find estimates for the process without interaction and to prove the theorem in this case. Next, we will show, for the ``true'' process, that if two second class particles meet, then they both survive with a probability going to $1$ as $\epsilon$ goes to $0$; this implies the theorem.


\subsubsection{The process without interaction}

Consider a family $\( \Ncal_\lambda^b \)_{0 \leq \lambda < \frac{1}{2}}$ of Poisson point processes such that the parameter of $\Ncal_\lambda^b$ is $\lambda$ and for all $0 \leq \lambda \leq \mu < \frac{1}{2}$, $\Ncal_\lambda^b \subset \Ncal_\mu^b$ and $\Ncal_\mu^b \backslash \Ncal_\lambda^b$ is independent of $\Ncal_\lambda^b$. Take also a family $\( \eta_0^\lambda \)_{0 \leq \lambda < \frac{1}{2}}$ of initial configurations such that $\eta_0^\lambda(2) \( 1-\eta_0^\lambda(1) \) = 1$ for all $\lambda \in \left[ 0,\frac{1}{2} \right[$, the distribution of $\eta_0^\lambda$ on $\left\{ 3,4,\ldots \right\}$ is $\nu^\lambda$, and for all $x \geq 3$ and all $0 \leq \lambda \leq \mu < \frac{1}{2}$, $\eta_0^\lambda(x) \leq \eta_0^\mu(x)$ almost surely. Then using the same Poisson point processes $\( \Ncal_x, x \geq 1 \)$ for the bulk dynamic we construct, as in section \ref{GraphicalConstruction}, the family of TASEP $(\eta_.^\lambda)_{0 \leq \lambda < \frac{1}{2}}$ such that $\eta_.^\lambda$ is a TASEP($\lambda$) and for all $t \geq 0$ and all $0 \leq \lambda \leq \mu < \frac{1}{2}$, $\eta_t^\lambda \leq \eta_t^\mu$ almost surely. At time $0$ we add a second class particle in site $1$ to each of these processes and we denote $X_\lambda(t)$ the position at time $t$ of the particle in the process $\eta_.^\lambda$ (with the convention $X_\lambda(t) = 0$ if the particle is died). We define
\[
S^\lambda \eqdef \indic{X_\lambda \text{ survives}},
\]
and
\[
T_x^\lambda \eqdef \infset{X_\lambda(t) = x},
\]
for all $x \geq 1$.

We start with an intuitive lemma which will be useful to propagate results from the process without interaction to the true process. 

\begin{lemm}
The function $p : \left[ 0,1 \right] \rightarrow \left[ 0,1 \right]$ is continuous.
\end{lemm}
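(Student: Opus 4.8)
The plan is to prove continuity of $p$ by exhibiting it as an explicit monotone limit of continuous functions, and then invoking Dini's theorem, or alternatively by a direct coupling argument showing $|p(\mu) - p(\lambda)|$ is small when $\mu - \lambda$ is small. I will take the coupling route, since the Harris construction used throughout the paper makes it natural. Recall $p(\lambda)$ is the survival probability of a single second class particle added at site $1$ to a stationary TASEP($\lambda$) with a first class particle forced at site $2$ (equivalently, in the coupling of the TASEP($\lambda$) with a TASEP($\mu$) through the reservoir, the second class particle marks the single discrepancy). Monotonicity of $p$ (already asserted in the text, with $p(0)=1$, $p(1/2)=0$) gives one-sided control for free; the content is to rule out a jump.

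First I would set up, for $0 \le \lambda \le \mu < 1/2$, the standard coupling of a TASEP($\lambda$) $\eta^\lambda_\cdot$ below a TASEP($\mu$) $\eta^\mu_\cdot$ built from the same bulk Poisson clocks $\Ncal_x$ and nested reservoir processes $\Ncal^b_\lambda \subset \Ncal^b_\mu$, started from a common configuration on $\{3,4,\dots\}$ distributed as $\nu^\lambda$ with a particle at site $2$ and a second class particle at site $1$, exactly as in the ``process without interaction'' construction just given in the excerpt. The key quantity to control is the probability that the extra (reservoir-generated) discrepancies between $\eta^\lambda$ and $\eta^\mu$ ever interfere with the tagged second class particle $X_\lambda$. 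The main step is a "no early interaction" estimate: if $X_\lambda$ survives, it moves out to infinity with asymptotic speed $1-2\lambda > 0$ (cite \cite{Saada1987} as the paper already does), whereas the extra reservoir points appear at site $1$ at rate $\mu - \lambda$, and each such discrepancy, being itself dominated by a second class particle in a TASEP of density $< 1/2$, also drifts out with positive speed but is born later. So on the survival event, the probability that any reservoir discrepancy born before time $t$ catches up to $X_\lambda$ can be bounded, using the speed estimates and a large-deviation tail for the position of $X_\lambda$, by a quantity $\delta(\mu-\lambda)$ with $\delta(h) \to 0$ as $h \to 0$, uniformly in the starting data.

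Granting that, I would conclude as follows. On the event that no reservoir discrepancy ever interacts with $X_\lambda$, the trajectory of $X_\lambda$ in the coupled system is identical to its trajectory in the pure TASEP($\lambda$) system, and likewise — rereading the same clocks as driving a TASEP($\mu$) — the tagged particle in $\eta^\mu$ coincides with $X_\lambda$; hence on this event $S^\lambda = S^\mu$. Therefore $|p(\mu) - p(\lambda)| = |\prob{S^\mu=1} - \prob{S^\lambda=1}| \le \prob{\text{some reservoir discrepancy interacts with } X_\lambda} \le \delta(\mu-\lambda) \to 0$. Continuity at the endpoint $\lambda = 1/2$ is separate but easy: $p$ is monotone with $p(1/2)=0$, so left-continuity there is automatic, and we only need $p(\lambda)\to 0$ as $\lambda \uparrow 1/2$, which follows since the speed $1-2\lambda$ of a surviving particle vanishes while the reservoir keeps firing, so a standard argument (or the explicit hydrodynamic picture) forces the survival probability to $0$. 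That handles all of $[0,1]$ (with $p \equiv 0$ on $[1/2,1]$ by convention).

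The main obstacle is making the "no early interaction" estimate quantitative and uniform: one must control not just that $X_\lambda$ drifts away but that it does so fast enough relative to the times at which the $O(\mu-\lambda)$-rate extra discrepancies are born, and that those discrepancies do not pile up behind $X_\lambda$ and collectively overtake it. I expect this to require a union bound over the (Poisson, rate $\mu-\lambda$) family of discrepancies together with a tail bound of the form $\prob{X_\lambda(t) < (1-2\lambda)t/2 \mid S^\lambda = 1} \le C e^{-ct}$, which is exactly the kind of estimate available for second class particles in the strictly subcritical regime; assembling these into a single bound $\delta(\mu-\lambda)$ that is independent of $\lambda$ on compact subsets of $[0,1/2)$ is the technical heart of the argument.
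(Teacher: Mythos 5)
Your overall strategy --- couple the $\lambda$ and $\mu$ systems through nested reservoirs and common bulk clocks, and bound $|p(\mu)-p(\lambda)|$ by the probability that the discrepancies between the two systems ever disturb the tagged second class particle --- starts from the same coupling the paper uses, but the step you yourself identify as ``the technical heart'' is a genuine gap, and it is precisely the step the paper's proof is designed to avoid. Your argument requires a \emph{global-in-time} non-interaction event, and to control it you would need (a) a quantitative tail bound of the type $\prob{X_\lambda(t) < (1-2\lambda)t/2 \mid S^\lambda = 1} \le Ce^{-ct}$ for the tagged particle conditioned on survival in the semi-infinite system with boundary, which is not established in the paper (only the qualitative positive-speed statement from \cite{Saada1987} is invoked) and is not obviously available; and (b) a union bound over \emph{infinitely many} discrepancies. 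On point (b) you also understate the problem: you start both processes from ``a common configuration distributed as $\nu^\lambda$,'' but then the $\mu$-process does not have the initial law entering the definition of $p(\mu)$; with the correct coupled initial conditions ($\nu^\lambda$ below $\nu^\mu$) there are infinitely many \emph{initial} discrepancies of density $\mu-\lambda$ scattered through the bulk, each of which behaves like a first class particle overtaking the tagged second class particle from behind, and you would have to show none of them ever catches up.

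The paper sidesteps all of this with a two-stage localization that only needs soft facts. First, survival is reduced to a finite-\emph{space} event: since, conditionally on death, $M=\max_t X(t)$ is a.s.\ finite, one can choose $x$ with $\prob{S=0 \mid T_x<\infty}<\epsilon'$, uniformly for densities in $[\lambda,\lambda+c]$ by monotonicity. Second, reaching $x$ is reduced to a finite-\emph{time} event: choose $t_0$ so that, given $T_x^\lambda<\infty$, the particle reaches $x$ before $t_0$ with probability $>1-\epsilon'$. Then one only needs the two coupled systems to agree on the finite box $\{1,\dots,x\}$ up to time $t_0$, which holds as soon as there are no initial discrepancies in that box and no extra reservoir points in $[0,t_0]$ --- an event of probability tending to $1$ as $c'\downarrow 0$ with no rate-of-escape estimate whatsoever. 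If you want to salvage your route, the cleanest fix is to insert exactly this localization: replace ``no discrepancy ever interacts with $X_\lambda$'' by ``the systems agree on a fixed finite space-time window chosen in advance from the $\lambda$-system alone.''
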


\begin{proof}
We start by proving the right continuity of $p$. Since $p(\lambda) = 0$ for $\lambda \geq \frac{1}{2}$, it is sufficient to prove it on $\left[ 0,\frac{1}{2} \left[ \right. \right. $. Let $0 \leq \lambda < \frac{1}{2}$, $\epsilon' > 0$ and $0 < c < \lambda - \frac{1}{2}$. There exists some $x \geq 1$ such that
\[
\prob{S^{\lambda + c} = 0 | T_x^{\lambda + c} < \infty} < \epsilon'.
\]
Indeed, if $M \eqdef \max \left\{ X_{\lambda + c}(t), t \geq 0 \right\}$ then conditionally to $\left\{ S^{\lambda + c} = 0 \right\}$, $M$ is almost surely finite. Thus there exists $x \geq 1$ such that
\[
\prob{M \geq x | S^{\lambda + c} = 0} < \epsilon' \frac{p(\lambda + c)}{1-p(\lambda + c)},
\]
which implies
\[
\prob{S^{\lambda + c} = 0 | T_x^{\lambda + c} < \infty} = \prob{M \geq x | S^{\lambda + c} = 0} \frac{\prob{S^{\lambda + c} = 0}}{\prob{M \geq x}} < \epsilon'.
\]
Furthermore for all $\epsilon \in \left[ 0,c \right]$,
\begin{align}
\begin{split} \label{MortSachantLoin}
\prob{S^{\lambda + \epsilon} = 0 | T_x^{\lambda + \epsilon} < \infty} &= \frac{\prob{S^{\lambda + \epsilon} = 0}-1}{\prob{T_x^{\lambda + \epsilon} < \infty}} + 1,\\
												         &\leq \frac{\prob{S^{\lambda + c} = 0}-1}{\prob{T_x^{\lambda + c} < \infty}} + 1,\\
												         &< \epsilon'.
\end{split}
\end{align}
Now let $t_0 \geq 0$ such that
\begin{equation} \label{TempsAtteinte}
\prob{\sup_{t \in \left[ 0,t_0 \right]} X_\lambda(t) \geq x | T_x^\lambda < \infty} > 1-\epsilon'.
\end{equation}
We can find $0 < c' \leq c$ such that
\begin{equation} \label{PasInteraction}
\prob{\sum_{i=1}^x \( \eta_0^{\lambda + c'}(i)-\eta_0^\lambda(i) \) = 0, \hspace{2mm} \( \Ncal_{\lambda + c'}^b \backslash \Ncal_\lambda^b \) \cap \left[ 0,t_0 \right] = \emptyset} > 1-\epsilon'.
\end{equation}
Conditionally to the event
\[
A \eqdef \left\{ \sup_{t \in \left[ 0,t_0 \right]} X_\lambda(t) \geq x, \hspace{1mm} \sum_{i=1}^x \( \eta_0^{\lambda + c'}(i)-\eta_0^\lambda(i) \) = 0, \hspace{1mm} \( \Ncal_{\lambda + c'}^b \backslash \Ncal_\lambda^b \) \cap \left[ 0,t_0 \right] = \emptyset \right\},
\]
we have almost surely $T_x^{\lambda + c'} < \infty$. Thus using \eqref{MortSachantLoin}, \eqref{TempsAtteinte} and \eqref{PasInteraction}
\begin{align*}
\begin{split}
\prob{T_x^{\lambda + c'} < \infty | T_x^\lambda < \infty} &\geq \prob{T_x^{\lambda + c'} < \infty | A} \prob{A | T_x^\lambda < \infty} = \prob{A | T_x^\lambda < \infty},\\
										    &\geq \( 1-\epsilon' \)^2 > 1-2\epsilon'.
\end{split}
\end{align*}
Finally we get
\begin{align*}
\begin{split}
p(\lambda) - p(\lambda + c') &= \prob{S^{\lambda + c'} = 0, S^\lambda = 1},\\
					    &\leq \prob{S^{\lambda + c'} = 0 | T_x^{\lambda + c'} < \infty} + \prob{T_x^{\lambda + c'} = \infty | T_x^\lambda < \infty},\\
					    &< 3\epsilon'.
\end{split}
\end{align*}
For the left continuity we do the same reasoning.
\end{proof}

Now we prove Theorem \ref{DensiteOrdreEpsilon} in the case without interaction.

\begin{prop}
$\frac{N_t}{t}$ and $\frac{N_t^{inf}}{t}$ both have almost sure limits as $t$ goes to infinity and
\[
\lim_{\epsilon \rightarrow 0} \frac{1}{\epsilon} \lim_{t \rightarrow \infty} \frac{N_t^{inf}}{t} = \lambda \( 1-\lambda \) p(\lambda).
\]
\end{prop}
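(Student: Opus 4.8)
The plan is to handle both statements through the same three-term decomposition, the non-interacting quantity $N_t^{inf}$ being the one whose limit I can pin down explicitly. Writing $B_t$ for the number of supplementary second class particles born in $\eta^{inf}_\cdot$ before time $t$, I would split
\[
N_t^{inf}=A_t+C_t,\qquad A_t\eqdef\sharp\{\,i\le B_t:S_i^{inf}=1\,\},\qquad C_t\eqdef\sharp\{\,i:T_i^e\le t<T_i^s,\ S_i^{inf}=0\,\},
\]
so that $A_t$ counts the particles born by time $t$ that survive forever, and $C_t$ those that will eventually be killed but are still alive at time $t$. I claim that almost surely $B_t/t\to\epsilon\lambda(1-\lambda)$, $A_t/B_t\to p(\lambda)$ and $C_t/t\to0$; together these give $N_t^{inf}/t\to\epsilon\lambda(1-\lambda)p(\lambda)$, so that $\tfrac1\epsilon\lim_{t\to\infty}N_t^{inf}/t=\lambda(1-\lambda)p(\lambda)$ for \emph{every} $\epsilon\in(0,c]$ and the limit $\epsilon\downarrow0$ is then immediate. (The same decomposition will also give the convergence of $N_t/t$ for the interacting process, see the last paragraph.)

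For the births, recall that under $\Mi[2]$ the first class particles are distributed according to $\nu^\lambda$, which is stationary and ergodic for the TASEP($\lambda$) on $\ZZp$ (Liggett's theorem, cf.\ Proposition \ref{pr2}); hence, as in \eqref{eq3}, $\tilde T_t/t\to\nu^\lambda\{\eta(1)=0,\eta(2)=1\}=\lambda(1-\lambda)$ almost surely. A supplementary particle is born exactly at each point of the independent rate-$\epsilon$ process $\Ncal^b_2$ that falls in the time set $\{s:\eta^{(1)}_s(2)(1-\eta^{(1)}_s(1))=1\}$, of normalised length $\tilde T_t/t$, so the time substitution $\psi$ from the proof of Proposition \ref{LLG} applies verbatim: it gives $B_t/t\to\epsilon\lambda(1-\lambda)$ and shows that, reparametrised by that local time, the birth counting process is a genuine Poisson process of rate $\epsilon$. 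For the survival averages, note that by the coupling the first class configuration on $\{3,4,\ldots\}$ at each birth time has law $\nu^\lambda$, so $\prob{S_i^{inf}=1}=p(\lambda)$ (this was recorded before the theorem), and that---since supplementary particles do not interact---$S_i^{inf}$ is one fixed measurable functional of the Harris data (the first class configuration together with the clocks $\Ncal_x$ and $\Ncal^b_1$) time-shifted to start at $T_i^e$. Because this data is time-stationary and ergodic and the extra clock $\Ncal^b_2$ is an independent Poisson process, the sequence of shifted environments read along the successive birth times is stationary and ergodic (Palm calculus, or ``Poisson arrivals see time averages''), hence so is $(S_i^{inf})_{i\ge1}$; Birkhoff's theorem gives $\frac1n\sum_{i=1}^nS_i^{inf}\to p(\lambda)$, i.e.\ $A_t/B_t\to p(\lambda)$.

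To discard $C_t$ I would first prove a uniform exponential tail for the lifetime $L_i\eqdef T_i^s-T_i^e$ of a dying particle: such a particle disappears only by being overwritten by a first class particle at site $1$, and an argument in the spirit of Lemma \ref{lem1}---with probability bounded below, a run of first class entries in one unit of time pushes it one step to the left, and once at site $1$ it dies---yields $\prob{L_i>u,\ S_i^{inf}=0}\le a_1e^{-a_2u}$ with $a_1,a_2>0$ independent of $i$. Then, with $r_t\eqdef(\log t)^2$, one has $C_t\le\sharp\{\text{births in }[t-r_t,t]\}+\sharp\{i:T_i^e\le t-r_t,\ L_i>r_t\}$: the first term is $O(r_t\log t)$ by the large-deviation bound for the (time-changed Poisson) birth process, while the expectation of the second is at most $\epsilon\lambda(1-\lambda)\,t\,a_1e^{-a_2 r_t}$, which is summable over $t\in\NN$, so by Borel--Cantelli the second term vanishes for all large $t$. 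Hence $C_t/t\to0$, and the announced limit for $N_t^{inf}$ follows.

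For the interacting process one runs the identical argument on the $i=3$ model started from $\Mi[2]$. Second class particles are born at the points of $\Ncal^b_2$ in the set of times at which site $2$ carries a first class particle and site $1$ carries no particle of class $1$ or $2$; the normalised length of this set converges by ergodicity of $\Mi[2]$ and Proposition \ref{LLG}. A dying second class particle still has an exponentially small lifetime tail (blocking by the other, $O(\epsilon)$-sparse, second class particles does not affect the pushing mechanism), so the analogue of $C_t$ is again negligible; and the survival indicators form a stationary ergodic sequence under the Palm measure of the stationary process with law $\Mi[2]$, so their Cesàro averages converge a.s.\ to a constant $p_\epsilon\in(0,1)$. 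Therefore $N_t/t$ converges a.s.\ (to $\epsilon\,p_\epsilon$ times the density of that birth-enabled set), the identification of $p_\epsilon$ and the proof that $p_\epsilon\to p(\lambda)$ as $\epsilon\downarrow0$ being the business of the remainder of the section. The two steps I expect to be most delicate are (i) the exponential tail for the lifetime of a dying second class particle, which is what lets $C_t$ be thrown away, and (ii) the passage from time-ergodicity of the environment to ergodicity of the mark sequence $(S_i^{inf})$ needed for Birkhoff; everything else is bookkeeping around the time-change Lemma \ref{LLG}.
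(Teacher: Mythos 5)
Your outline reproduces the paper's strategy --- births governed by the time-changed Poisson clock of Proposition \ref{LLG} at asymptotic rate $\epsilon\lambda(1-\lambda)$, each supplementary particle surviving with probability $p(\lambda)$, and the ``alive but doomed'' particles negligible --- but it replaces the paper's two key devices by different ones, and each replacement leaves a substantive hole. First, for $A_t/B_t\to p(\lambda)$ you invoke Birkhoff for the mark sequence $(S_i^{inf})_{i\ge1}$, justified by ``Palm calculus''. This needs the \emph{joint} system (the interacting process started from $\mu^{(2)}_{\infty}$, the coupled TASEP($\lambda$) $\eta_.^{inf}$, and the clocks) to be time-stationary and ergodic, and then ergodicity of the associated Palm sequence of marks; the coupling used in the paper is only required to preserve the order, and a jointly stationary ergodic coupling is neither constructed nor obviously available. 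The paper avoids this entirely: it obtains the almost sure convergence of $N_t^{inf}/t$ to a \emph{deterministic} limit directly from Proposition \ref{LLG} (ergodicity of the stationary background plus the independent clocks), and then identifies the constant by a first-moment computation using only $\esp{S_i^{inf}}=p(\lambda)$, so no ergodicity of the mark sequence is ever needed. To keep your route you must either build the stationary coupling and prove Palm ergodicity, or fall back on the paper's $L^1$ identification.

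Second, your disposal of $C_t$ rests on a \emph{uniform exponential} tail $\prob{L_i>u,\ S_i^{inf}=0}\le a_1e^{-a_2u}$. The Lemma \ref{lem1}-style pushing argument kills a doomed particle at a geometric rate only \emph{while it sits in a fixed box} $\{1,\ldots,x\}$; a doomed particle may first make a long excursion to the right, and converting the argument into a tail bound in $u$ requires quantitative (say exponential) decay in $x$ of $\prob{\sup_t X_i^{inf}(t)\ge x,\ S_i^{inf}=0}$. Nothing in the paper provides such a rate --- Lemma \ref{EstimationSurvie} only says this probability tends to $0$ --- so your Borel--Cantelli step is not yet justified. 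It is also not needed: once $N_t^{inf}/t$ and $A_t/t$ both converge almost surely and $\esp{C_t}/t\to0$, the nonnegative almost sure limit of $C_t/t$ must be $0$ by uniform integrability (since $C_t\le N_t^e$). A last, harmless, overstatement: the birth condition in the real system also requires site $1$ to be free of second class particles, so $B_t/t\to\epsilon(\lambda(1-\lambda)+O(\epsilon))$ rather than exactly $\epsilon\lambda(1-\lambda)$; this does not affect the $\epsilon\downarrow0$ statement, but your claim that $\frac1\epsilon\lim_t N_t^{inf}/t=\lambda(1-\lambda)p(\lambda)$ holds for every fixed $\epsilon$ is not warranted.
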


\begin{proof}
The convergences to almost sure limits are a consequence of Proposition \ref{LLG}. Indeed, for example $N_t$ counts the number of elements of $\Ncal_2^b$ for which $\eta_t(1) \neq 1$ and $\eta_t(2) = 1$ minus the number of elements of $\Ncal_1^b$ for which $\eta_t(1) = 2$.

Then we have:
\begin{equation} \label{eq10}
\frac{N_t^{inf}}{t} = \frac{1}{t} \sum_{i=1}^{N_t^e} S_i(t) \geq \frac{1}{t} \sum_{i=1}^{N_t^e} S_i.
\end{equation}
Furthermore, since $\esp{S_i} = p(\lambda)$ for all $i \geq 1$, the expectation of the right hand side of \eqref{eq10} converges to $\lambda(1-\lambda)p(\lambda)\epsilon$. But if we denote $(t_n)_{n \geq 1}$ the successive times for which the $N_{t_n}^e$-th particle is exactly the $n$-th particle which will survive, then we have:
\begin{equation*}
\frac{N_{t_n}^{inf}}{t_n} = \frac{1}{t_n} \sum_{i=1}^{N_{t_n}^e} S_i = \frac{n}{t_n}.
\end{equation*}
Thus if $n_t \eqdef \sup\{n \geq 1 : t_n \leq t\}$ then, since $\frac{t_n}{n}$ converges almost surely to $(\lim_{t\to\infty} \frac{N_t^{inf}}{t})^{-1}$, we have almost surely:
\begin{equation} \label{eq11}
\lim_{t\to\infty} \frac{n_t - N_t^{inf}}{t} = 0.
\end{equation}
On the other side, $n_t$ is exactly the number at time $t$ of particles which are already born and which will survive, i.e., $n_t = \sum_{i=1}^{N_t^e} S_i$ almost surely. This shows that the right hand side of \eqref{eq10} converges also almost surely to $\lambda(1-\lambda)p(\lambda)\epsilon$ and \eqref{eq11} implies that $\frac{N_t^{inf}}{t}$ converges to the same limit.
\end{proof}


\subsubsection{Interaction implies survival}

The following lemma states that if a second class particle goes far enough, then it survives with high probability.

\begin{lemm} \label{EstimationSurvie}

For all $\epsilon' > 0$, there exists $x_0$ (depending only on $\lambda$ and $\epsilon'$) such that if $c$ is small enough, then for all $i \geq 1$
\begin{equation*}
\prob{T_i^s < \infty; \exists t \geq 0, X_i(t) \geq x_0} < \epsilon'.
\end{equation*}
\end{lemm}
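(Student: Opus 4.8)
The plan is to squeeze the trajectory $X_i$ of the $i$-th second class particle between the two non-interacting trajectories $X_i^{sup}\le X_i\le X_i^{inf}$ coming from the couplings with $\bar\Omega_\lambda$ and $\bar\Omega_{\lambda+\epsilon}$ set up above, and then to combine two elementary facts: a second class particle which is eventually absorbed has, on that event, an almost surely finite maximal displacement; and, by the preceding lemma, $p$ is continuous, so $p(\lambda)-p(\lambda+c)$ is small once $c$ is small.

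First I would pass to the non-interacting processes by pure monotonicity. From $X_i^{sup}(t)\le X_i(t)\le X_i^{inf}(t)$ for all $t$ one reads off three inclusions: if $\{t\ge 0:X_i(t)\ge1\}$ is bounded then so is $\{t\ge0:X_i^{sup}(t)\ge1\}$, i.e. $\{T_i^s<\infty\}\subseteq\{S_i^{sup}=0\}$; if $X_i(t)\ge x_0$ for some $t$ then $X_i^{inf}(t)\ge x_0$ for that same $t$; and if $\{t\ge0:X_i^{sup}(t)\ge1\}$ is unbounded then so is $\{t\ge0:X_i^{inf}(t)\ge1\}$, i.e. $\{S_i^{sup}=1\}\subseteq\{S_i^{inf}=1\}$. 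Hence
\[
\{T_i^s<\infty;\ \exists t:X_i(t)\ge x_0\}\ \subseteq\ \{S_i^{sup}=0,\ S_i^{inf}=1\}\ \cup\ \bigl\{S_i^{inf}=0;\ \textstyle\sup_{t\ge0}X_i^{inf}(t)\ge x_0\bigr\},
\]
and, using $\prob{S_i^{inf}=1}=p(\lambda)$, the inclusion $\{S_i^{sup}=1\}\subseteq\{S_i^{inf}=1\}$, and $\prob{S_i^{sup}=1}\ge p(\lambda+c)$ (which holds since $\epsilon\le c$ and $p$ is non-increasing), this gives
\[
\prob{T_i^s<\infty;\ \exists t:X_i(t)\ge x_0}\ \le\ \bigl(p(\lambda)-p(\lambda+c)\bigr)+\prob{S_i^{inf}=0;\ \textstyle\sup_{t\ge0}X_i^{inf}(t)\ge x_0}.
\]

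Then I would treat the two terms separately. On $\{S_i^{inf}=0\}$ the supplementary particle is absorbed at the origin at a finite time, so, by the Harris construction, only finitely many jump times occur before that time and the trajectory is bounded; hence $M_i\eqdef\sup_{t\ge0}X_i^{inf}(t)<\infty$ almost surely on $\{S_i^{inf}=0\}$. Since the distribution of $X_i^{inf}$ (started from the canonical configuration $\eta_{T_i^e}^{inf}$ at its birth time) is the same for every $i$ and does not depend on $c$, the number $g(x_0)\eqdef\prob{S_i^{inf}=0;\ M_i\ge x_0}$ is independent of $i$ and of $c$, and $g(x_0)\downarrow\prob{S_i^{inf}=0;\ M_i=\infty}=0$ as $x_0\to\infty$. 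So I first fix $x_0=x_0(\lambda,\epsilon')$ with $g(x_0)<\epsilon'/2$, and then, using the continuity of $p$ from the preceding lemma, choose $c$ small enough that $p(\lambda)-p(\lambda+c)<\epsilon'/2$; this yields the bound, uniformly in $i$.

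The one place that needs care is the bookkeeping of the sandwich in the first step: the event under consideration forces the \emph{faster} particle $X_i^{inf}$ to reach $x_0$ while the \emph{slower} particle $X_i^{sup}$ is absorbed, and these two constraints concern different auxiliary particles, so they cannot come from a single comparison; the discrepancy between them is exactly a difference of survival probabilities, which is why the continuity of $p$ and the freedom to take $c$ small are what close the argument. Beyond the monotone couplings already in place, the only probabilistic input is the (essentially trivial) almost sure finiteness of the range of a second class particle conditioned to be absorbed.
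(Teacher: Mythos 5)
Your argument is correct and follows essentially the same route as the paper: prove the estimate first for the non-interacting trajectory $X_i^{inf}$ (whose law is the same for all $i$ and depends only on $\lambda$), then absorb the discrepancy between $X_i$ and $X_i^{inf}$ into a difference of survival probabilities $p(\lambda)-p(\lambda+c)$, made small by the continuity of $p$. The only cosmetic difference is that you bound that discrepancy via the event $\{S_i^{sup}=0,\,S_i^{inf}=1\}$ where the paper uses $\{S_i\neq S_i^{inf}\}$; both reduce to the same quantity.
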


\begin{proof}
We start by proving the same result for $X^{inf}$. Let $M \eqdef \sup \left\{ X_i^{inf}(t), t \geq 0 \right\}$. Conditionally on $\left\{ T_i^{s,inf} < \infty \right\}$, $M$ is almost surely finite, thus we can choose $x_0$ such that
\[
\prob{M \geq x_0 | T_i^{s,inf} < \infty} < \frac{\epsilon'}{2}.
\]
Hence
\[
\prob{T_i^{s,inf} < \infty; \exists t \geq 0, X_i^{inf}(t) \geq x_0} = \prob{M \geq x_0 | T_i^{s,inf} < \infty}\prob{T_i^{s,inf} < \infty} < \frac{\epsilon'}{2}.
\]
Furthermore, since the law of $X_i^{inf}$ is the same for all $i \geq 1$ (because there is no interaction among supplementary particles), we can choose the same $x_0$ for all $i \geq 1$.
Then we have
\begin{align*}
\prob{T_i^s < \infty; \exists t \geq 0, X_i(t) \geq x_0} &\leq \prob{T_i^{s,inf} < \infty; \exists t \geq 0, X_i^{inf}(t) \geq x_0}\\
									       &\hspace{5mm} + \prob{S_i \neq S_i^{inf}},\\
									       &< \frac{\epsilon'}{2} + p(\lambda) - p(\lambda + c),\\
									       &< \epsilon',
\end{align*}
if $c$ is small enough.
\end{proof}

For $x \geq 1$, let $T_x \eqdef \infset{X_i(t) = x}$ (we omit the dependance on $i$ in the notation because there will be no possible confusion). We can deduce from this lemma a stronger form of the same estimate:

\begin{coro} \label{EstimationSurvieForte}
Let $x \geq 1$. For all $\epsilon' > 0$, there exists $x_1$ depending only on $\lambda$, $\epsilon'$ and $x$ such that if $c$ is small enough then for all $i \geq 1$
\begin{equation*}
\prob{T_{x_1} < \infty; \exists t \geq T_{x_1}, X_i(t) \leq x} < \epsilon'.
\end{equation*}
\end{coro}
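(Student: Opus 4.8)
The plan is to \emph{iterate} Lemma \ref{EstimationSurvie} a fixed (but large) number of times, using the strong Markov property at the first hitting time of a suitable far-away level $x_0$, and observing that once the particle has gone far, with high probability it never comes back near $x$. Fix $x \geq 1$ and $\epsilon' > 0$. First I would apply Lemma \ref{EstimationSurvie}: there is some level $y_0 = y_0(\lambda, \epsilon'')$ such that, for $c$ small, $\prob{T^s_i < \infty;\ \exists t \geq 0,\ X_i(t) \geq y_0} < \epsilon''$ for a small auxiliary constant $\epsilon''$ to be chosen at the end. The idea is that if the particle ever reaches $y_0$ it is very likely to survive, hence very likely to have positive asymptotic speed $1-2\lambda$ (by the coupling with $X^{sup}$ and the survival estimate $\prob{S_i = 1} \geq p(\lambda+c)$), hence very likely to drift to $+\infty$ and in particular eventually stay above $x$.

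The key steps, in order, would be: (i) choose $y_0$ as above so that reaching $y_0$ without surviving has probability $< \epsilon''$; (ii) on the event $\{T_{y_0} < \infty\}$, apply the strong Markov property at time $T_{y_0}$ — after this time $X_i$ is dominated below by a supplementary second-class particle started at level $y_0$ in the environment $\nu^{\lambda+c}$, which survives with probability $\geq p(\lambda+c)$ and then has speed $1-2\lambda > 0$, so it returns below $x$ only finitely often; more precisely, conditionally on surviving, the probability that $X_i$ ever drops back to level $\leq x$ after $T_{y_0}$ can be made $< \epsilon''$ by taking $y_0$ even larger (this is exactly the ``$M$ finite a.s.'' argument of Lemma \ref{EstimationSurvie} applied to the time-reversed excursion below $y_0$, or simply: conditionally on survival the minimum of $X_i$ after $T_{y_0}$ tends to $+\infty$ with $y_0$); (iii) set $x_1 \eqdef y_0$ and combine: on $\{T_{x_1} < \infty\}$, either the particle fails to survive from $x_1$ (probability $< \epsilon''$), or it survives but still returns to $\leq x$ (probability $< \epsilon''$), so the bad event has probability $< 2\epsilon''$, and choosing $\epsilon'' = \epsilon'/2$ finishes the proof. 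Since the law of $X_i$ after its entrance is the same for all $i$ (the first-class environment is always $\nu^\lambda$-distributed at the entrance time, as noted before Theorem \ref{DensiteOrdreEpsilon}), the same $x_1$ works uniformly in $i$.

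The main obstacle I anticipate is step (ii): making precise that, \emph{conditionally on survival}, a second-class particle that has climbed to a high level $y_0$ is unlikely ever to come back down to a fixed low level $x$, with a bound uniform in $c$ small. The clean way is to dominate $X_i$ from below by $X_i^{sup}$, the supplementary particle in the TASEP$(\lambda+c)$ environment, note $\prob{S_i^{sup} = 1} = p(\lambda+c) \to p(\lambda) > 0$ as $c \downarrow 0$ by continuity of $p$ (the preceding lemma), and then use that on survival $X_i^{sup}(t)/t \to 1 - 2(\lambda+c)$, which is bounded away from $0$ for $c$ small; hence $\inf_{t \geq T_{y_0}} X_i^{sup}(t) \to \infty$ in probability as $y_0 \to \infty$, uniformly in $c \leq c_0$. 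This gives the required level $x_1 = y_0$ and the uniformity, and the estimate transfers to $X_i \geq X_i^{sup}$ at the cost of the already-controlled term $\prob{S_i \neq S_i^{sup}} \leq p(\lambda+c) - p(\lambda) \cdot \mathbf{1} $... — more simply, $\prob{S_i \neq S_i^{inf}} \le p(\lambda+c)-p(\lambda)$ as in Lemma \ref{EstimationSurvie}, which is $<\epsilon''$ for $c$ small.
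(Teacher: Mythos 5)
Your overall strategy (reach a far level $x_1$, then argue that returning below $x$ is unlikely) has the right shape, but the way you justify the second half --- transferring transience from the speed theorem through the coupling $X_i^{sup} \leq X_i \leq X_i^{inf}$ --- has a genuine gap, and it is precisely the point where the paper does something different. The problem is that $T_{x_1}$ is the hitting time of $x_1$ by the \emph{true} particle $X_i$, while your control on ``never coming back below $x$'' lives on the trajectory of $X_i^{sup}$. Since $X_i^{sup} \leq X_i$, the inclusion available to you is $\left\{ \exists t \geq T_{x_1},\ X_i(t) \leq x \right\} \subset \left\{ \exists t \geq T_{x_1},\ X_i^{sup}(t) \leq x \right\}$; but at time $T_{x_1}$ the lagging particle $X_i^{sup}$ may well still be at a site $\leq x$ (it has no reason to have reached $x_1$, or even $x$, by then), so the right-hand event can have probability close to $1$ and the bound is vacuous. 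Appealing to ``the $M$ finite a.s.\ argument applied to the time-reversed excursion'' does not resolve this: for the true, interacting particle $X_i$, the implication ``survival $\Rightarrow X_i(t) \to \infty$'' is exactly what is at stake here and is not available before the corollary is proved (a particle oscillating forever in a bounded box would survive in your sense). Your route could probably be patched --- lower-bound $T_{x_1}$ by a deterministic time with high probability, fix a $c_0$ and use the monotone coupling in $c$, and invoke the a.s.\ speed of the surviving supplementary particle --- but that needs several additional estimates and uniformity checks (in $i$ and in $c$) that your sketch does not supply.

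The paper's proof avoids all of this with a killing-event argument, the same device as in Lemma \ref{lem1}, and this is the idea your proposal is missing. Let $E_t$ be the event that during $[t,t+1]$ first class particles enter one after another and fill the box $\left\{1,\ldots,x\right\}$ (with $\Ncal_x$ silent on that interval); it has probability $q_x(\lambda) > 0$ depending only on $\lambda$ and $x$, and on $E_t$ every second class particle sitting in $\left\{1,\ldots,x\right\}$ at time $t$ is dead by time $t+1$. One then chooses $x_1$ by Lemma \ref{EstimationSurvie} with tolerance $\epsilon' q_x(\lambda)$ rather than $\epsilon'$, and lets $T_x^+$ be the first return of $X_i$ to $x$ after $T_{x_1}$, a stopping time. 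The strong Markov property gives $\prob{T_i^s < \infty \mid T_x^+ < \infty} \geq q_x(\lambda)$, whence $\prob{T_x^+ < \infty} \leq \prob{T_i^s < \infty,\ \exists t \geq 0,\ X_i(t) \geq x_1} / q_x(\lambda) < \epsilon'$. In words: instead of proving that a surviving particle does not come back, one proves that a particle that does come back has a uniformly positive chance of dying, and one already knows that ``reaching $x_1$ and then dying'' is rare. (A minor slip besides: your bound $\prob{S_i \neq S_i^{inf}} \leq p(\lambda+c)-p(\lambda)$ has the sign reversed; since $p$ is non-increasing the correct bound is $p(\lambda)-p(\lambda+c)$.)
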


\begin{proof}
We will use the same method as in Lemma \ref{lem1}. Let $E_t$ be the following event on the Poisson point processes of the Harris system during the time space $[t,t+1]$:
\begin{itemize}
\item one first class particle enters in site $1$ and moves to site $x$;
\item then one first class particle enters and moves to site $x-1$;
\item we continue in the same way until $x$ first class particles are entered in the system and they have moved until that the box $\{1,\ldots,x\}$ is full;
\item finally we impose that $\Ncal_x \cap [t,t+1] = \emptyset$.
\end{itemize}
Then $q_x(\lambda) \eqdef \prob{E_t}$ depends only on $\lambda$ and $x$, is positive and under this event every second class particle which was in the box $\left\{ 1, \ldots, x \right\}$ at time $t$ is died at time $t+1$.

Now let $x_1$ be given by Lemma \ref{EstimationSurvie} such that $\prob{T_i^s < \infty, \exists t \geq 0, X_i(t) \geq x_1} < \epsilon' q_x(\lambda)$ and define $T_x^+ \eqdef \inf \left\{ t \geq T_{x_1} : X_i(t) = x \right\}$. Then we have $\prob{T_i^s < \infty | T_x^+ < \infty} \geq q_x(\lambda)$. This implies
\[
\prob{T_{x_1} < \infty, \exists t \geq T_{x_1}, X_i(t) \leq x} = \prob{T_x^+ < \infty} = \frac{\prob{T_i^s < \infty, T_x^+ < \infty}}{\prob{T_i^s < \infty | T_x^+ < \infty}} < \epsilon'.
\]
\end{proof}

The next lemma states that if we fix $x \geq 1$, then the probability that two second class particles meet in the box $\left\{ 1, \ldots, x \right\}$ goes to $0$ with $\epsilon$.

\begin{lemm}
Let $T_{i+1 \rightarrow i}$ be the first time at which the $\(i+1\)$-th second class particle tries to jump on the site occupied by the $i$-th second class particle. Then for all fixed $x \geq 1$,
\begin{equation*}
\prob{T_{i+1 \rightarrow i} < \infty, X_i(T_{i+1 \rightarrow i}) \leq x} \tend[0]{\epsilon} 0, \hspace{5mm} \text{ uniformly in $i$}.
\end{equation*}
\end{lemm}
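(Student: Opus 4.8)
The plan is to quantify the following heuristic: for $\epsilon$ small, the $(i+1)$-th second class particle is born only after a delay of order $1/\epsilon$, by which time the $i$-th second class particle has, with high probability, either died or escaped far from the boundary, so it can no longer be overtaken at a site $\leq x$.

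Fix $\epsilon' > 0$ and set $D_i \eqdef \{T_{i+1 \to i} < \infty,\ X_i(T_{i+1 \to i}) \leq x\}$. Applying Corollary \ref{EstimationSurvieForte} with parameter $x$, choose $x_1 = x_1(\lambda,\epsilon',x)$ so that, once $c$ is small enough, the event $A_i \eqdef \{T_{x_1} < \infty;\ \exists t \geq T_{x_1},\ X_i(t) \leq x\}$ has probability $< \epsilon'$ for every $i$. On $D_i \cap A_i^c$ the $i$-th particle cannot have visited level $x_1$ before the meeting time: it is alive at $T_{i+1 \to i}$ (a site must be occupied for the $(i+1)$-th particle to try to jump onto it) and sits there at a site $\leq x$, whereas if $T_{x_1} \leq T_{i+1 \to i}$ then $A_i^c$ would force $X_i(t) > x$ for all $t \geq T_{x_1}$, a contradiction. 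Since the motion is nearest-neighbour, on $D_i \cap A_i^c$ the $i$-th particle therefore stays alive inside the box $B \eqdef \{1,\dots,x_1-1\}$ on the whole interval $[T_i^e, T_{i+1 \to i}]$, and in particular accumulates there a time at least $T_{i+1}^e - T_i^e$ before it ever reaches $x_1$.

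Let $\sigma_i$ be the total time the $i$-th second class particle spends alive in $B$ before first hitting $x_1$ (and $+\infty$ on the null event it never dies nor reaches $x_1$). Repeating the clearing argument of Lemma \ref{lem1} and Corollary \ref{EstimationSurvieForte}, with the box $\{1\}$ replaced by $B$, gives that over each successive unit time interval there is a probability $q = q(\lambda,x_1) > 0$, independent of the past by the strong Markov property, that enough first class particles enter and flush $B$, pushing any second class particle there down to site $1$ and killing it; hence $\prob{\sigma_i > s} \leq (1-q)^{\lfloor s \rfloor}$ and $\esp{\sigma_i} \leq 1/q \eqdef D_\lambda(x_1)$, uniformly in $i$. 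Now $\sigma_i$ is a functional of the configuration at $T_i^e$, of the first class particles, of the second class particles of lower index, and of the bulk clocks $\Ncal_x$ — but \emph{not} of $\Ncal_2^b$ after time $T_i^e$, because a later-born second class particle lies behind the $i$-th one and, the dynamics being of exclusion type among particles of the same class, never influences it. Let $G$ be the waiting time from $T_i^e$ to the next atom of $\Ncal_2^b$; by the strong Markov property for this Poisson process, $G$ is exponential of parameter $\epsilon$ and independent of $\sigma_i$, and $T_{i+1}^e - T_i^e \geq G$. Consequently
\[
\prob{D_i \cap A_i^c} \leq \prob{\sigma_i \geq G} = \esp{1 - e^{-\epsilon \sigma_i}} \leq \epsilon\, \esp{\sigma_i} \leq \epsilon\, D_\lambda(x_1),
\]
so that $\prob{D_i} \leq \epsilon' + \epsilon\, D_\lambda(x_1)$ for every $i$ once $c$ is small enough; letting $\epsilon \to 0$ and then $\epsilon' \to 0$ yields the claim, the uniformity coming from the fact that none of $x_1$, $q$, $D_\lambda(x_1)$ depends on $i$.

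The delicate point is the measurability statement in the last paragraph: one must verify that the trajectory of the $i$-th second class particle, hence $\sigma_i$, is unaffected by the atoms of $\Ncal_2^b$ occurring after its birth, so that $T_{i+1}^e - T_i^e$ genuinely dominates an independent exponential variable of parameter $\epsilon$. Everything else is a now-routine repetition of the clearing-event estimate already used twice above.
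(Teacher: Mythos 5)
Your proof is correct and follows essentially the same route as the paper: Corollary \ref{EstimationSurvieForte} to control returns after reaching $x_1$, the clearing-event argument of Lemma \ref{lem1} to get an exponential tail on the time spent alive in the box, and the independence of the next atom of $\Ncal_2^b$ after $T_i^e$ from the trajectory of $X_i$. The only (cosmetic) difference is that you integrate the tail bound directly via $\prob{\sigma_i \geq G} \leq \epsilon\,\esp{\sigma_i}$, where the paper picks a deterministic $t_0$ and bounds $\prob{T_\epsilon \leq t_0} = 1 - e^{-\epsilon t_0}$; your variant even yields an explicit $O(\epsilon)$ rate.
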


\begin{proof}
Fix $\epsilon' > 0$ and let $x_1$ and $0< c_0 < \frac{1}{2} - \lambda$ be given by Corollary \ref{EstimationSurvieForte} such that
\begin{equation} \label{DistanceDeRetour}
\prob{T_{x_1} < \infty, \exists t \geq T_{x_1}, X_i(t) \leq x} < \epsilon', \hspace{5mm} \text{ for all $\epsilon \leq c_0$}.
\end{equation}
Then $x_1$ and $c_0$ depend only on $\lambda$ and $\epsilon'$ (and $x$). We have:
\begin{align}
\begin{split} \label{TempsDansBoite}
\prob{\exists s \geq t, X_i(s) \in \left\{ 1,\ldots,x \right\}} &\leq \prob{\exists s \geq t, X_i(s) \in \left\{ 1,\ldots,x \right\}, T_{x_1} \leq t}\\
							      & \hspace{5mm} + \prob{X_i(t) \geq 1, T_{x_1} > t },\\
							      &\leq \prob{T_{x_1} < \infty, \exists s \geq T_{x_1}, X_i(s) \leq x}\\
							      & \hspace{5mm} + \prob{X_i(s) \in \left\{ 1,\ldots,x_1 \right\}, \forall s \in \left[ 0,t \right]}.
\end{split}
\end{align}
As in Lemma \ref{lem1}, we have
\[
\prob{X_i(s) \in \left\{ 1,\ldots,x_1 \right\}, \forall s \in \left[ 0,t+1 \right]} \leq \( 1-q_{x_1}(\lambda) \)\prob{X_i(s) \in \left\{ 1,\ldots,x_1 \right\}, \forall s \in \left[ 0,t \right]},
\]
which implies the existence of a constant $C > 0$ depending only on $\lambda$ and $\epsilon'$ such that
\[
\prob{X_i(s) \in \left\{ 1,\ldots,x_1 \right\}, \forall s \in \left[ 0,t \right]} \leq e^{-Ct}.
\]
Finally, using \eqref{DistanceDeRetour} and \eqref{TempsDansBoite}, there exists some deterministic $t_0 \geq 0$, depending only on $\lambda$ and $\epsilon'$, such that
\[
\prob{\exists s \geq t, X_i(s) \in \left\{ 1,\ldots,x \right\}} < 2\epsilon',
\]
for all $t \geq t_0$ and $\epsilon \leq c_0$.

Besides, if we define $T_\epsilon$ as the time elapsed between $T_i^e$ and the first jumping time of $\Ncal_2^b$ greater than $T_i^e$, then $T_\epsilon$ is an exponential random variable with parameter $\epsilon$ independent of the trajectory of $X_i$. As a consequence, we have
\begin{align*}
\prob{T_{i+1 \rightarrow i} < \infty, X_i(T_{i+1 \rightarrow i}) \leq x} &\leq \prob{\exists t \geq T_\epsilon, X_i(t) \in \left\{ 1,\ldots,x \right\}},\\
												      &\leq \prob{\exists t \geq T_\epsilon, X_i(t) \in \left\{ 1,\ldots,x \right\}, T_\epsilon > t_0}\\
												      &\hspace{5mm}+ \prob{T_\epsilon \leq t_0},\\
												      &< 2\epsilon' + 1 - e^{-\epsilon t_0}.
\end{align*}
Finally we have $\prob{T_{i+1 \rightarrow i} < \infty, X_i(T_{i+1 \rightarrow i}) \leq x} \tend[0]{\epsilon} 0$ uniformly in $i$.
\end{proof}

Now we are able to prove that when a second class particle meets another one, both survive with a probability going to $1$ as $\epsilon$ goes to $0$.

\begin{coro}
\begin{equation} \label{RencontreImpliqueSurvie}
\prob{T_{i+1 \rightarrow i} < \infty, T_{i+1}^s < \infty} \tend[0]{\epsilon} 0, \hspace{5mm} \text{ uniformly in $i$}.
\end{equation}
\end{coro}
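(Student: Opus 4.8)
The plan is to combine the previous lemma (the probability that the $(i+1)$-th second class particle meets the $i$-th one inside any fixed box $\{1,\ldots,x\}$ goes to $0$ with $\epsilon$, uniformly in $i$) with Lemma \ref{EstimationSurvie} (if a second class particle ever goes beyond $x_0$, it survives with probability $\geq 1-\epsilon'$). The key point is that on the event $\{T_{i+1\to i}<\infty\}$, either the meeting happens inside the box $\{1,\ldots,x_0\}$ — which is rare by the previous lemma — or the meeting happens with $X_i(T_{i+1\to i})\geq x_0$, in which case the $i$-th particle has visited $x_0$ and hence, by Lemma \ref{EstimationSurvie} applied with index $i$, is unlikely to have $T_i^s=\infty$; but we want information on $T_{i+1}^s$, so this requires a short extra argument to transfer the survival estimate from particle $i$ to particle $i+1$.

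First I would fix $\epsilon'>0$ and take $x_0=x_0(\lambda,\epsilon')$ from Lemma \ref{EstimationSurvie}. Then I would split
\begin{align*}
\prob{T_{i+1\to i}<\infty,\ T_{i+1}^s<\infty}
&\leq \prob{T_{i+1\to i}<\infty,\ X_i(T_{i+1\to i})\leq x_0}\\
&\hspace{5mm}+\prob{T_{i+1\to i}<\infty,\ X_i(T_{i+1\to i})>x_0,\ T_{i+1}^s<\infty}.
\end{align*}
The first term is $<\epsilon'$ once $\epsilon$ is small enough, uniformly in $i$, by the previous lemma with $x=x_0$. For the second term, the crucial observation is that when the $(i+1)$-th particle catches up with the $i$-th particle at a site $\geq x_0$, their positions coincide at that moment and thereafter $X_{i+1}(t)\geq X_i(t)$ is not quite automatic (second class particles push each other), but in fact at the meeting time the $(i+1)$-th particle sits where the $i$-th one was, so it has itself reached site $x_0$; thus the second term is bounded by $\prob{\exists t\geq 0,\ X_{i+1}(t)\geq x_0,\ T_{i+1}^s<\infty}$, which is $<\epsilon'$ by Lemma \ref{EstimationSurvie} applied to index $i+1$ (and again uniform in $i$, since the bound there does not depend on $i$). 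Combining, $\prob{T_{i+1\to i}<\infty,\ T_{i+1}^s<\infty}<2\epsilon'$ for all $i$ and all $\epsilon$ small enough, which is the claim.

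The main obstacle, and the point to be careful about, is the transfer from particle $i$ to particle $i+1$ at the meeting time: one must argue cleanly that at $T_{i+1\to i}$ the particle $X_{i+1}$ actually occupies a site $\geq x_0$ (equivalently, that $X_i(T_{i+1\to i})>x_0$ forces $X_{i+1}$ to have visited $x_0$), so that Lemma \ref{EstimationSurvie} can be invoked for index $i+1$ rather than for index $i$ — since it is the survival of the $(i+1)$-th particle, not the $i$-th, that appears in \eqref{RencontreImpliqueSurvie}. If one prefers to avoid this, an equivalent route is to observe that $\{T_{i+1}^s<\infty\}\cap\{X_{i+1}$ ever reaches $x_0\}$ has probability $<\epsilon'$ uniformly, and that on $\{T_{i+1\to i}<\infty,\ X_i(T_{i+1\to i})>x_0\}$ the $(i+1)$-th particle has indeed reached $x_0$; everything else is a routine union bound.
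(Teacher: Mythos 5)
Your proposal is correct and follows essentially the same route as the paper: split on whether the meeting occurs with $X_i(T_{i+1\to i})\leq x_0$ (rare by the preceding lemma, uniformly in $i$) or at a site beyond $x_0$, in which case the $(i+1)$-th particle — being adjacent to the $i$-th at the meeting time — has itself reached $x_0$ and Lemma \ref{EstimationSurvie} applies to it directly. The "transfer" step you flag as delicate is exactly the observation the paper uses implicitly when it writes the second term of the decomposition with $X_{i+1}(T_{i+1\to i})\geq x_0$.
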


\begin{proof}
Fix $\epsilon' > 0$ and let $x_0$ be given by lemma \ref{EstimationSurvie}. We have
\begin{align*}
\prob{T_{i+1 \rightarrow i} < \infty, T_{i+1}^s < \infty}  &= \prob{T_{i+1 \rightarrow i} < \infty, T_{i+1}^s < \infty, X_i(T_{i+1 \rightarrow i}) \leq x_0}\\
										& \hspace{5mm} + \prob{T_{i+1 \rightarrow i} < \infty, T_{i+1}^s < \infty, X_{i+1}(T_{i+1 \rightarrow i}) \geq x_0},\\
										&\leq \prob{T_{i+1 \rightarrow i} < \infty, X_i(T_{i+1 \rightarrow i}) \leq x_0}\\
										& \hspace{5mm} + \prob{T_{i+1}^s < \infty, \exists t \geq 0, X_{i+1}(t) \geq x_0},\\
										&< 2\epsilon',
\end{align*}
if $\epsilon$ is small enough.
\end{proof}


\subsubsection{The proof of Theorem \ref{DensiteOrdreEpsilon}}

Fix $\epsilon' > 0$ and use \eqref{RencontreImpliqueSurvie} to find $\epsilon > 0$ small enough to have
\[
\prob{T_{i+1 \rightarrow i} < \infty, T_{i+1}^s < \infty} < \epsilon'.
\]
We have already seen that both $\frac{N_t}{t}$ and $\frac{N_t^{inf}}{t}$ converge to almost sure limits and that $\frac{1}{\epsilon} \lim_{t \rightarrow \infty} \frac{N_t^{inf}}{t}$ converges almost surely to $\lambda \( 1-\lambda \) p(\lambda)$ as $\epsilon$ goes to $0$. We also have $\lim_{t \rightarrow \infty} \frac{N_t^e}{t} \leq \lambda \( 1-\lambda \) \epsilon$, where $N_t^e$ is the number of second class particles entered in the system up to time $t$. Thus if we define
\[
\tau \eqdef \infset{\forall s \geq t, \frac{N_s^e}{s} \leq \( \lambda \( 1-\lambda \) + 1 \) \epsilon},
\]
then $\tau$ is almost surely finite and $N_t^{inf} - N_t = \sum_{i=1}^{N_t^e} \indic{S_i^{inf}(t) = 1, S_i(t) = 0}$ which implies
\begin{align}
\begin{split} \label{MajorationDifference}
\esp{\frac{N_t^{inf} - N_t }{t} \indic{\tau \leq t}} &\leq \frac{1}{t} \sum_{i=1}^{\( \lambda \( 1-\lambda \) + 1 \) \epsilon t} \prob{S_i^{inf}(t) = 1, S_i(t) = 0, \tau \leq t},\\
								      &\leq \frac{1}{t} \sum_{i=2}^{\( \lambda \( 1-\lambda \) + 1 \) \epsilon t} \prob{T_{i \rightarrow i-1} < \infty, T_i^s < \infty},\\
								      &\leq \( \lambda \( 1-\lambda \) + 1 \) \epsilon \epsilon',
\end{split}
\end{align}
and, by dominated convergence theorem, the left hand side of \eqref{MajorationDifference} converges to $\lim_{t \rightarrow \infty} \frac{N_t^{inf}}{t} - \lim_{t \rightarrow \infty} \frac{N_t}{t}$ as $t$ goes to infinity. Hence, dividing by $\epsilon$, we get
\begin{equation*}
0 \leq \lambda \( 1-\lambda \) p(\lambda) - \frac{1}{\epsilon} \lim_{t \rightarrow \infty} \frac{N_t}{t} \leq  \( \lambda \( 1-\lambda \) + 1 \) \epsilon'.
\end{equation*}
Since $\epsilon'$ was arbitrary we can conclude.


\bibliographystyle{plain}
\bibliography{Biblio}

\end{document}